\newcommand{\defi}[1]{\textit{\color{blue}#1}}    
\newcommand\inv{inv}
\newcommand\outdeg{\operatorname{outdeg}}   
\newcommand\conv{\operatorname{conv}} 
\newcommand\lcm{\operatorname{lcm}} 
\newcommand\Hilb{\operatorname{Hilb}}
\newtheorem{thm}{Theorem}[section]
\newtheorem{prop}[thm]{Proposition}
\newtheorem{lemma}[thm]{Lemma}
\newtheorem{cor}[thm]{Corollary}
\newtheorem*{theorem*}{Theorem}
\newtheorem*{cor*}{Corollary}
\newtheorem*{prop*}{Proposition}
\newtheorem{defn}[thm]{Definition}
\newtheorem{rem}[thm]{Remark}
\newtheorem{example}[thm]{Example}
\newtheorem{question}[thm]{Question}
\begin{document}

\title{One-skeleta of $G$-parking function ideals: \\
resolutions and standard monomials}

\author{Anton Dochtermann}
\address{Department of Mathematics, Texas State University, San Marcos}
\email{dochtermann@txstate.edu}


\date{\today}


\begin{abstract}
Given a graph $G$, the $G$-parking function ideal $M_G$ is an artinian monomial ideal in the polynomial ring $S$ with the property that a linear basis for $S/M_G$ is provided by the set of $G$-parking functions.  It follows that the dimension of $S/M_G$ is given by the number of spanning trees of $G$, which by the Matrix Tree Theorem is equal to the determinant of the reduced Laplacian of $G$. The ideals $M_G$ and related algebras were introduced by Postnikov and Shapiro where they studied their Hilbert functions and homological properties.  The author and Sanyal showed that a minimal resolution of $M_G$ can be constructed from the graphical hyperplane arrangement associated to $G$, providing a combinatorial interpretation of the Betti numbers. 

Motivated by constructions in the theory of chip-firing on graphs, we study certain `skeleton' ideals $M_G^{(k)} \subset M_G$  generated by subsets of vertices of $G$ of size at most $k+1$. Here we focus our attention on the case $k=1$, the $1$-skeleton of the $G$-parking functions ideals. We consider standard monomials of $M_G^{(1)}$ and provide a combinatorial interpretation for the dimension of $S/M_G^{(1)}$ in terms of the signless Laplacian for the case $G = K_{n+1}$ is the complete graph. Our main study concerns homological properties of these ideals. We study resolutions of $M_G^{(1)}$ and show that for a certain class of graphs minimal resolution is supported on decompositions of Euclidean space coming from the theory of tropical hyperplane arrangements.  This leads to combinatorial interpretations of the Betti numbers of these ideals.

\end{abstract}
\maketitle

\section{Introduction}\label{sec:intro}

Let $G = (V,E)$ be a simple undirected graph with vertex set $V = \{0,1, \dots, n\}$, with distinguished sink vertex 0.  In this paper we study various algebraic objects associated with combinatorial properties of $G$.  Our point of departure will be the well-known formula of Cayley that says that the number of spanning trees of the complete graph is equal to $(n+1)^{n-1}$.  This number is related to various other seemingly disparate combinatorial objects, and in particular counts the number of \emph{parking functions} of length $n$.  This correspondence can be generalized to the case of arbitrary graphs $G$ in the context of sequence of integers known as \emph{$G$-parking functions}.

On the algebraic side this data can be encoded in the \emph{$G$-parking function ideal} $M_G$, a monomial ideal living in the polynomial ring $S = {\mathbb K}[x_1, x_2, \dots, x_n]$.  By construction the standard monomials of $M_G$ correspond to $G$-parking functions (this explains the name), and hence are in bijection with the number of spanning trees of $G$.  The standard monomials form a monomial basis for the algebra $S/M_G$ and, in the case of $G = K_{n+1}$ the complete graph, Cayley's formula says that $S/M_G$ has dimension $(n+1)^{n-1}$.   A generating set of monomials for $M_G$ is indexed by all nonempty subsets of $[n] = \{1,2,\dots, n\}$. 
\[M_G = \langle m_\sigma: \emptyset \neq \sigma \subseteq [n] \rangle\]
We refer to the next section for the definition of $m_\sigma$.  For the case of the complete graph $K_{n+1}$ this set gives a minimal set of generators, but for all other graphs $G$ this generating set is redundant.

The ideal $M_G$ was introduced by Postnikov and Shapiro in \cite{PosSha}, where they study various properties of ${\mathcal A}_G = S/M_G$.  In particular they seek to compare ${\mathcal A}_G$ to a related algebra ${\mathcal B}_G$, defined as the quotient of $S$ by certain powers of linear forms also described by the underlying graph $G$.  In \cite{PosSha} it is shown that ${\mathcal A}_G$ and ${\mathcal B}_G$ have the same dimension.  The motivation in \cite{PosSha} came from certain algebras generated by curvature forms on generalized flag manifolds, in an attempt to lift Schubert calculus to the level of differential forms. 

In \cite{PosSha} the authors study homological properties of $M_G$ and show that a cellular resolution of $M_G$ is supported on ${\mathcal B}(\Delta_{n-1})$, the barycentric subdivision of the $(n-1)$-dimensional simplex, which is minimal only for the case of the complete graph $G = K_{n+1}$.  The ideals $M_G$ are examples of more general \emph{monotone} monomial ideals which have cellular resolutions supported on the geometric realization of certain posets. In \cite{DocSan} it is shown that a \emph{minimal} resolution of $M_G$ is supported on a certain polyhedral complex obtained from the graphical hyperplane arrangement associated with $G$.


In this paper we seek to study generalizations of $M_G$ and similarly relate their algebraic properties to well-known combinatorial structures.  Our motivation comes from the theory of \emph{chip-firing} on $G$.  In this context one studies the dynamical system associated with distributing chips on the vertices of a $G$ according to edge adjacency information. The dynamics of chip-firing is governed by the \emph{Laplacian matrix} ${\mathcal L}_G$ of the graph, and it can be shown that the $G$-parking function ideal $M_G$ is in fact a certain initial ideal of the (binomial) lattice ideal determined by ${\mathcal L}_G$. 

Recently several authors have studied the notion of `hereditary set' chip-firing, where certain subsets of vertices are allowed to fire simultaneously (see for instance \cite{Bac} and \cite{CarPaoSpo}).  This rather general notion of chip-firing interpolates between the `abelian sandpile' model (where only singletons fire) and the `cluster model' (where any subset is allowed to fire).  One can see that the `stable' configurations in the cluster model are precisely the $G$-parking functions.  Hereditary set chip-firing shares desirable properties with the more traditional models, including stabilization that is independent of the chosen firings and a well-defined notion of a \emph{recurrent} configuration.   In \cite{Bac} an explicit bijection between these recurrent configurations and the set of spanning trees is described.



Motivated by these constructions,  we study subideals of $M_G$ described by $k$-dimensional `skeleta'.  Recall that $M_G$ has a (possibly redundant) set of generators $m_\sigma$ for every nonempty subset $\sigma \subseteq [n] = \{1,2, \dots, n\}$.  For each integer $k$, define the ideal $M_G^{(k)}$ to be the sub ideal of $M_G$ generated by elements corresponding to subsets of cardinality at most $k+1$:
\[M_G^{(k)} = \langle m_\sigma : \emptyset \neq \sigma \subseteq [n], |\sigma| \leq k+1 \rangle.\]
For $k=0$ the ideal is generated by powers of the variables (corresponding to the degree of the corresponding vertex), and for $k=n-1$ we recover the ideal $M_G$.  In this paper focus on the case $k=1$, the one-dimensional skeleton of $M_G$.  It turns out for this cases there is a story of free resolutions and monomial bases that runs somewhat parallel to that of $M_G$ described above. 

Recall that the standard monomials of $M_G$ are by construction given by the $G$-parking functions (and hence in bijection with the spanning trees of $G$). By the matrix-tree theorem this number is given by the determinant of $\tilde {\mathcal L}_G$, the reduced Laplacian matrix of $G$.  A formula for the number of standard monomials of $M_G^{(k)}$ is not so clear even in the case that $G = K_{n+1}$ is the complete graph.  For the case $k=1$ we can find such a formula, and we see some intriguing connections to other combinatorial objects.  Our main result along these lines are summarized by the following, we refer to subsequent sections for any undefined terms.

\begin{theorem*}[see Theorems \ref{thm:standardoneskel} and Corollary \ref{cor:standardoneskel}]
Let $M_n^{(1)} \coloneqq M_{K_{n+1}}^{(1)}$ denote the $1$-skeleton ideal of the complete graph $K_{n+1}$.   Then the number of standard monomials of $M_{n}^{(1)}$ (and hence $\dim_{\mathbb K} S/M_n^{(1)}$) is given by 
\begin{equation} \label{eq:oneskelstd}
(2n-1)(n-1)^{n-1}.
\end{equation}
This number is given by $\det(\tilde{\mathcal Q}_{K_{n+1}})$, the determinant of the reduced signless Laplacian of the complete graph $K_{n+1}$. 
\end{theorem*}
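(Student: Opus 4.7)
The plan is to compute both quantities in the theorem directly and observe that they agree, with the main work being a clean characterization of the standard monomials of $M_n^{(1)}$.

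First I would unpack the generators of $M_n^{(1)}$ explicitly. By the definition of $m_\sigma$ for the complete graph $K_{n+1}$, a vertex $i \in \sigma$ has $(n+1) - |\sigma|$ edges to $V \setminus \sigma$. Thus for $|\sigma| = 1$ the generator is $m_{\{i\}} = x_i^n$, and for $|\sigma| = 2$ it is $m_{\{i,j\}} = x_i^{n-1} x_j^{n-1}$. Hence
\[
M_n^{(1)} = \langle x_i^n : 1 \le i \le n \rangle + \langle x_i^{n-1} x_j^{n-1} : 1 \le i < j \le n \rangle.
\]

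Next I would characterize the standard monomials. A monomial $x^a = x_1^{a_1} \cdots x_n^{a_n}$ lies outside $M_n^{(1)}$ precisely when it is divisible by none of the above generators, equivalently when every $a_i \le n-1$ and the set $\{i : a_i = n-1\}$ has cardinality at most one. Partition the standard monomials by the size of this set: either all $a_i \le n-2$, giving $(n-1)^n$ monomials, or exactly one coordinate equals $n-1$ with the rest at most $n-2$, giving $n(n-1)^{n-1}$ monomials. Summing yields
\[
(n-1)^n + n(n-1)^{n-1} = (n-1)^{n-1}\bigl((n-1) + n\bigr) = (2n-1)(n-1)^{n-1},
\]
proving the first assertion.

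For the identification with $\det(\tilde{\mathcal{Q}}_{K_{n+1}})$, I would use that the signless Laplacian of $K_{n+1}$ equals $\mathcal{Q}_{K_{n+1}} = nI_{n+1} + (J_{n+1} - I_{n+1}) = (n-1)I_{n+1} + J_{n+1}$, and deleting the row and column of the sink vertex $0$ gives $\tilde{\mathcal{Q}}_{K_{n+1}} = (n-1)I_n + J_n$. The eigenvalues of $J_n$ are $n$ (with multiplicity one, eigenvector the all-ones vector) and $0$ (with multiplicity $n-1$), so the eigenvalues of $\tilde{\mathcal{Q}}_{K_{n+1}}$ are $2n-1$ once and $n-1$ with multiplicity $n-1$. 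Therefore $\det(\tilde{\mathcal{Q}}_{K_{n+1}}) = (2n-1)(n-1)^{n-1}$, matching the monomial count.

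There is no real obstacle here: the entire argument is a direct computation once the generators of $M_n^{(1)}$ are written down. The only subtlety is recognizing the clean two-case split of standard monomials (rather than, say, trying a more elaborate bijective approach), which makes both the count and its factorization $(2n-1)(n-1)^{n-1}$ transparent and immediately comparable with the spectrum of the reduced signless Laplacian.
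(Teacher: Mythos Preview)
Your proof is correct and follows essentially the same approach as the paper: the same two-case split on whether some coordinate equals $n-1$ for the monomial count, and the same eigenvalue computation (writing $\tilde{\mathcal Q}_{K_{n+1}} = (n-1)I_n + J_n$) for the determinant.
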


The value $\det(\tilde{\mathcal Q}_G)$ appearing in (\ref{eq:oneskelstd}) counts (in a weighted fashion) certain combinatorial substructures of the graph $G$, in the same way that the usual reduced Laplacian $\det(\tilde{\mathcal L}_G)$ counts spanning trees (see below for details).  We are unable to find an explicit bijection between the standard monomials of $M_{n}^{(1)}$ and these substructures.  For arbitrary graphs $G$ we illustrate that $\det({\tilde Q}_G)$ does not count standard monomials of $M_G^{(1)}$ (see Example \ref{ex:K5minusedge}), but some computer experimentation suggests that an inequality may hold (see Question \ref{ques:inequality}).

After this paper was posted on the ArXiv we discovered that the standard monomials of $M_n^{(k)}$ can be seen to coincide with a certain class of \emph{${\bf u}$-vector parking functions} as introduced in \cite{PitSta}.  A combinatorial formula for the number of such sequences was established by Yan in \cite{Yan}. See Section \ref{sec:stdmon} and in particular Remark \ref{rem:gen} for more discussion, and also \cite{DocPar} for details regarding the `codimension-one' case of $k = n-2$.  As far as we know the standard monomials of $M_G^{(k)}$ for arbitrary graphs $G$ have no connection to vector parking functions.

We next turn to homological properties of $M_G^{(1)}$, and in particular a combinatorial/geometric description of a minimal free resolution.   Recall in \cite{PosSha} it was shown that a cellular resolution for $M_G$ is supported on the barycentric subdivision of a simplex (with a \emph{minimal} cellular resolution described in \cite{DocSan}). To build resolutions of the skeleton ideals $M_G^{(k)}$, we will search for polyhedral decompositions of the simplex that agree with ${\mathcal B}(\Delta_{n-1})$ on the appropriate subcomplex induced by subsets of cardinality at most $k+1$.  For the case of $k = 1$ there is a natural candidate coming from the the theory of \emph{tropical convexity}.  Our main result is the following.
\begin{theorem*} [see Theorem \ref{thm:resoneskel}]
The ideal $M_{K_{n+1}}^{(1)}$ has a minimal cocellular resolution supported on the labeled polyhedral complex induced by any generic arrangement of two tropical hyperplanes in ${\mathbb R}^{n-1}$.
\end{theorem*}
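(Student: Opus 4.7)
The plan is to apply the cocellular resolution framework of Bayer--Sturmfels and Miller--Sturmfels: once one produces a polyhedral complex $\mathcal{T}$ in $\mathbb{R}^{n-1}$ whose cells are labeled by monomials of $S$ in a coherent way, the associated cocellular complex resolves $S/M_{K_{n+1}}^{(1)}$ provided a standard acyclicity criterion is met for every monomial $m \in S$. My goal is to carry this out with $\mathcal{T}$ the labeled polyhedral complex induced by a generic arrangement of two tropical hyperplanes in $\mathbb{R}^{n-1}$, and to verify minimality at the end.

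First I would make the generators of $M := M_{K_{n+1}}^{(1)}$ completely explicit. The defining formula for $m_\sigma$ in the complete graph case yields $m_{\{i\}} = x_i^n$ and $m_{\{i,j\}} = x_i^{n-1}x_j^{n-1}$, so $M$ has exactly $n + \binom{n}{2}$ minimal generators. Next I would invoke the Develin--Sturmfels/Ardila--Develin dictionary between generic arrangements of $k$ tropical hyperplanes in $\mathbb{R}^{n-1}$ and regular mixed subdivisions of $k\Delta_{n-1}$; for $k=2$ this places a natural combinatorial structure on the cells of $\mathcal{T}$, with each cell carrying a pair of "types" recording the sector of each of the two tropical hyperplanes it lies in. Using this structure I would set up a bijection between the vertices of $\mathcal{T}$ and the minimal generators of $M$ (tropical apex vertices corresponding to the pure powers $x_i^n$, other distinguished vertices to the mixed generators $x_i^{n-1} x_j^{n-1}$), and then label every higher-dimensional face by the least common multiple of its vertex labels.

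The core of the argument is then verifying the cocellular acyclicity criterion: for every monomial $m \in S$ the subcomplex of $\mathcal{T}$ consisting of cells whose label is coordinatewise comparable to $m$ must be either empty or acyclic. Because the cells of a tropical arrangement are themselves polyhedra cut out by tropical (min/max) inequalities, and because the candidate labels read off these inequalities in a natural way, I expect the relevant level-set subcomplexes to be star-shaped with respect to a canonical cell, giving contractibility. Minimality should then follow by checking that whenever $F \subsetneq F'$ are faces of $\mathcal{T}$ their labels strictly differ; this reduces to a genericity statement about the two tropical hyperplanes that avoids accidental coincidences of LCMs. As a sanity check I would compute the alternating sum of face numbers of $\mathcal{T}$ and verify it is consistent with the dimension count $(2n-1)(n-1)^{n-1}$ from the preceding theorem on standard monomials.

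The main obstacle I anticipate is the acyclicity step: although the general machinery is standard, showing that every level subcomplex is contractible for the particular labeling arising from two tropical hyperplanes requires a careful case analysis on how the exponent vector of $m$ interacts with the two tropical apexes, and may demand a discrete Morse argument rather than a direct convexity argument. A secondary technical issue is pinning down the correct cocellular sign conventions so that the resolution really is the cochain complex of $\mathcal{T}$ and not a shifted or Alexander-dualized version; here I would lean on the recession fan of the unbounded cells of the arrangement to orient the complex correctly, paralleling what was done in \cite{DocSan} for the graphical hyperplane arrangement resolution of $M_G$.
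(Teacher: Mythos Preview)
Your proposal has the right overall architecture (produce a labeled complex, verify the Bayer--Sturmfels acyclicity criterion, check minimality), but there is a concrete error in how you set up the labeling, and you are missing the key idea that makes the acyclicity step go through cleanly.

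First, the labeling. In a \emph{cocellular} resolution the generators of the ideal sit on the \emph{maximal} cells, and every lower-dimensional face $G$ is labeled by the $\lcm$ of the labels $m_F$ over facets $F \supseteq G$. You instead put the generators on the vertices and take $\lcm$'s upward, which is the cellular convention. This is not just a cosmetic mix-up: a generic arrangement of two tropical hyperplanes in $\mathbb{R}^{n-1}$ has exactly $n$ vertices (and only two of them are apices), whereas $M_{K_{n+1}}^{(1)}$ has $\binom{n+1}{2}$ minimal generators, so there is no bijection of the kind you describe. The correct dictionary is: each maximal open region $F$ has a type $(T_a,T_b)$ with $T_a,T_b$ singletons, and one labels $F$ by $m_\sigma$ for $\sigma = T_a \cup T_b$; genericity guarantees every $\sigma$ with $1 \le |\sigma| \le 2$ appears exactly once. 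Lower faces then inherit labels by $\lcm$ over the facets containing them, and this can be read off directly from the type $(T_a,T_b)$ of any interior point via an explicit exponent formula.

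Second, the acyclicity step. You anticipate a ``careful case analysis'' or a discrete Morse argument, and you hope the down-sets are star-shaped. The paper avoids all of this: because the monomial label of a cell is a function of its type, and because the type of any point on a tropical line segment between ${\bf x}$ and ${\bf y}$ is contained in $T_{\mathcal A}({\bf x}) \cup T_{\mathcal A}({\bf y})$, each down-set $(X_{\mathcal A})_{\le m}$ is \emph{tropically convex}. Develin--Sturmfels showed tropically convex sets are contractible, so acyclicity is immediate. This tropical-convexity observation is the heart of the proof; once you have the explicit type-to-monomial formula, both acyclicity and minimality (distinct faces in a closure relation have distinct type multisets, hence distinct labels) fall out in a few lines with no case analysis.
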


As was spelled out in \cite{DevStu}, arrangements of tropical hyperplanes are intimately tied to triangulations of a product of simplices, which in turn are related to mixed subdivisions and have many pleasing combinatorial properties \cite{DelRamSan}.  In particular by taking the `staircase triangulation' we obtain a combinatorial interpretation for the Betti numbers of $M_n^{(1)} = M_{K_{n+1}}^{(1)}$ in terms of certain subgraphs of the complete bipartite graph $K_{n,2}$. In addition, we are able to obtain a closed formula for the Betti numbers of these ideals.
\begin{cor*}[See Corollary \ref{cor:totalBetti}]
The total Betti numbers $\beta_i^n$ of the ideal $M_{K_{n+1}}^{(1)}$ are given by
\[\beta_i^n = \sum_{j = 1}^n \; j {j-1 \choose i-1}.\]
\end{cor*}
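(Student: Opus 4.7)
The plan is to reduce the computation to a face-count of the minimal resolving complex, and then to enumerate these faces via the Develin--Sturmfels correspondence applied to the staircase triangulation of $\Delta_1 \times \Delta_{n-1}$.

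By the main theorem of the section (the preceding Theorem), $M_n^{(1)}$ admits a minimal cocellular resolution supported on the polyhedral complex $\Sigma_n$ induced by any generic arrangement of two tropical hyperplanes in $\mathbb{R}^{n-1}$. Since the resolution is minimal, the total Betti number $\beta_i^n$ equals the number of cells of $\Sigma_n$ of the appropriate dimension contributing in homological degree $i$. The first step is therefore to identify, precisely, which cells of $\Sigma_n$ are counted by $\beta_i^n$ (that is, to sort out the dimensional shift inherent in the cocellular convention, and to restrict to the bounded portion of the complex that actually supports the resolution).

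Once this indexing is fixed, I would pass to the Develin--Sturmfels picture: cells of an arrangement of two tropical hyperplanes in $\mathbb{R}^{n-1}$ are in bijection with certain bipartite graphs, i.e.\ subgraphs of $K_{n,2}$ satisfying covering and connectivity conditions, with the dimension of a cell read off from the number of edges in the associated graph. Specializing to the staircase arrangement (the generic tropical arrangement whose mixed subdivision of $2\Delta_{n-1}$ is the staircase subdivision), these bipartite subgraphs admit a transparent enumeration in terms of pairs of subsets of $[n]$ with a prescribed ``interleaving'' pattern.

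The key combinatorial step is then to stratify the $(i-1)$-dimensional cells of $\Sigma_n$ by a natural parameter $j \in \{1, \dots, n\}$: in the staircase picture there is a canonical maximal index $j$ attached to each cell (the largest coordinate at which the two ``sides'' of the bipartite graph interact nontrivially). A direct count then shows that, for each such $j$, the number of contributing cells is $j \binom{j-1}{i-1}$: the binomial records the $i-1$ further indices chosen from $\{1,\dots,j-1\}$, and the factor $j$ records the choice of a distinguished ``pivot'' position among the $j$ available slots. Summing over $j$ yields the claimed formula; as a sanity check, at $i=1$ this recovers the known number $n + \binom{n}{2} = \tfrac{n(n+1)}{2}$ of minimal generators of $M_n^{(1)}$, and the identity $j\binom{j-1}{i-1} = i\binom{j}{i}$ together with the hockey-stick identity gives the compact form $\beta_i^n = i\binom{n+1}{i+1}$, useful as an independent check.

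The main obstacle will be step one: pinning down exactly which cells in the staircase subdivision index $\beta_i^n$, because the cocellular setup inverts the usual dimension grading and because one must carefully isolate the bounded cells of the tropical arrangement (these, rather than all cells of the induced subdivision of the simplex, are the ones carrying labels relevant to syzygies). Once this identification is made, the counting in the staircase model is routine, and the ``pivot + subset'' decomposition described above makes the formula $\sum_{j=1}^n j \binom{j-1}{i-1}$ emerge without further computation. A natural alternative, worth pursuing in parallel as a cross-check, is to establish a recursion $\beta_i^n - \beta_i^{n-1} = n\binom{n-1}{i-1}$ by comparing the staircase subdivisions at consecutive ranks; this telescopes directly to the stated formula.
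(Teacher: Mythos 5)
Your overall approach is the same as the paper's: identify $\beta_i^n$ with a face count in the decomposition of $\mathbb{R}^{n-1}$ induced by the pair of tropical hyperplanes, and then count. The paper dispatches the second step by importing the face-count formula directly from \cite{DocJosSan}, whereas you sketch a direct enumeration via the Develin--Sturmfels type correspondence and a ``pivot $+$ subset'' stratification; that is a reasonable way to recover the same count, and your closed form $\beta_i^n = i\binom{n+1}{i+1}$ via the identity $j\binom{j-1}{i-1} = i\binom{j}{i}$ and the hockey-stick identity is a correct and useful reformulation (it checks against Example~\ref{ex:M3oneskel}: $6, 8, 3$ for $n=3$).

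There is, however, one concrete pitfall in your step one that would derail the count if pursued: you repeatedly say that one must ``restrict to the bounded portion of the complex'' or ``carefully isolate the bounded cells of the tropical arrangement.'' In the \emph{cocellular} setting of Theorem~\ref{thm:resoneskel}, that restriction is wrong. Lemma~\ref{lem:cocell} and the theorem take the full labeled decomposition $X_{\mathcal A}$ of $\mathbb{R}^{n-1}$ --- unbounded cells included --- with $\beta_i^n$ equal to the number of cells of codimension $i-1$. The maximal cells carrying the generators are the (unbounded) regions, and most of the lower-dimensional cells indexing syzygies are also unbounded. For $n=3$ the arrangement of two tropical lines in $\mathbb{R}^2$ has $6$ regions, $8$ edges (only $2$ of them bounded), and $3$ vertices, matching $\beta_1^3 = 6$, $\beta_2^3 = 8$, $\beta_3^3 = 3$; restricting to bounded edges would give $2$, not $8$. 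You appear to be importing intuition from the \emph{cellular} picture (the bounded ``tropical complex,'' or equivalently the mixed subdivision of $\Delta_{n-1} + \Delta_{n-1}$ via the Cayley trick), where a cellular resolution is indeed supported on a bounded complex with dimension indexing homological degree. Both pictures give the same Betti numbers, but they should not be mixed: either count \emph{all} cells of $X_{\mathcal A}$ by codimension, or count cells of the bounded mixed subdivision of $2\Delta_{n-1}$ by dimension. Once that indexing is fixed, your stratification by a maximal index $j$ together with the telescoping cross-check $\beta_i^n - \beta_i^{n-1} = n\binom{n-1}{i-1}$ is a sound way to finish.
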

The complex supporting a resolution of $M_{K_{n+1}}$ does not support a resolution of $M_G^{(1)}$ for arbitrary $G$.  However, we are able to utilize certain degenerations of the underlying arrangement to obtain a minimal resolution in certain cases.

\begin{prop*}[See Proposition \ref{prop:resoneskeldef}]
Suppose $G$ is of the form $G = H \ast \{v\}$, where $H$ is obtained by removing disjoint complete graphs from $K_n$.  Then $M_G^{(1)}$ has a minimal resolution supported on the subdivision of ${\mathbb R}^{n-1}$ induced by a (non-generic) arrangement of two tropical hyperplanes.
\end{prop*}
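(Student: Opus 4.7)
My starting point is the preceding theorem, which produces a minimal cocellular resolution of $M_{K_{n+1}}^{(1)}$ from a generic arrangement $\mathcal{T}$ of two tropical hyperplanes in $\mathbb{R}^{n-1}$, with cells labeled by the LCMs of the minimal generators $x_i^n$ and $x_i^{n-1}x_k^{n-1}$. The plan is to construct an explicit degeneration $\mathcal{T}'$ of $\mathcal{T}$ (still an arrangement of two tropical hyperplanes, but non-generic) whose cells and labels realize exactly the LCM-lattice of the minimal generators of $M_G^{(1)}$ for $G = H \ast \{v\}$, and then to deduce minimality directly from the structure of the degeneration.

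Before touching the geometry I would pin down the minimal generating set of $M_G^{(1)}$. Writing $H = K_n \setminus (K_{I_1} \sqcup \cdots \sqcup K_{I_r})$ and comparing exponents shows that for $i,k$ in a common block $I_j$ the monomial $m_{\{i,k\}}$ is divisible by $m_{\{i\}}$, while for $(i,k) \in E(H)$ the exponent of $x_i$ in $m_{\{i,k\}}$ drops by one relative to $m_{\{i\}}$. Hence the minimal generators of $M_G^{(1)}$ are the singleton generators $x_i^{\deg_G(i)}$ for $i \in [n]$ together with the binomials $m_{\{i,k\}}$ for $(i,k) \in E(H)$; in particular the generators drop by $\sum_j \binom{|I_j|}{2}$ compared to the complete case, and this is the precise deficit the degeneration must absorb.

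I would then specify $\mathcal{T}'$ by placing the two apexes $a, b \in \mathbb{R}^{n-1}$ so that $a_i = b_i$ for every $i$ lying in some block $I_j$, while keeping the remaining coordinates in generic position. The effect on types is local to each block: within a block $I_j$ the sector walls of the two hyperplanes in the coordinate directions indexed by $I_j$ collapse onto each other, which causes exactly those vertices of $\mathcal{T}$ labeled $x_i^{n-1}x_k^{n-1}$ with $i,k \in I_j$ to migrate onto, and be absorbed by, the singleton vertices. For $i,k$ in distinct blocks the relevant coordinates remain generic and the corresponding vertices survive, now carrying the updated labels $x_i^{\deg_G(i)-1}x_k^{\deg_G(k)-1}$ dictated by $G$. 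One checks directly that the surviving $0$-cells are in bijection with the minimal generators listed above.

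The main verification, and the step I expect to be the hardest, is to confirm that $\mathcal{T}'$ still supports a resolution and that this resolution is minimal. For acyclicity I would invoke the standard criterion for (co)cellular resolutions: for every monomial $m$ in the LCM-lattice of $M_G^{(1)}$, the subcomplex of cells whose label divides $m$ must be contractible. Because the degeneration is performed independently block-by-block, the analysis should reduce to the single-block case, where the collapsed subcomplex is still a tropical polytope (a subdivision of a face of $\mathcal{T}$) and hence contractible by the basic convexity results of Develin--Sturmfels. Minimality then follows from the fact that no cell of $\mathcal{T}'$ shares its label with an incident face: the $0$-cells are in bijection with the minimal generators by construction, and the higher-cell labels are strictly finer LCMs inherited from the type stratification. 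The delicate point to watch is that no two cells of $\mathcal{T}'$ accidentally acquire the same label across different blocks, which is ruled out by the genericity of $a,b$ on the complementary coordinates.
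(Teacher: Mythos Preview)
Your overall strategy matches the paper's: write down an explicit degenerate arrangement, identify its maximal cells with the minimal generators of $M_G^{(1)}$, and then reuse the tropical-convexity argument from the complete-graph case for acyclicity and minimality. The paper does exactly this, with $\mathbf{a}=\mathbf{0}$ and a specific $\mathbf{b}$ whose coordinates are constant on each removed clique and pairwise distinct across different cliques (and across the remaining vertices).

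However, the arrangement you propose is not the right one when $r\geq 2$. Up to translation the combinatorics of a two-hyperplane arrangement depends only on the difference vector $c=b-a$, and a maximal region with type-union $\{i,k\}$ (for $i\neq k$) exists if and only if $c_i\neq c_k$. Your condition ``$a_i=b_i$ for every $i$ lying in some block $I_j$'' forces $c_i=0$ for \emph{all} vertices belonging to \emph{any} block. Consequently, if $i\in I_\ell$ and $k\in I_m$ with $\ell\neq m$, you still have $c_i=c_k=0$, so the region with type-union $\{i,k\}$ disappears as well---but $ik\in E(H)$, so $m_{\{i,k\}}$ is a minimal generator that must correspond to a maximal cell. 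In other words, your degeneration collapses too much: it kills cross-block pairs along with the within-block pairs, so the maximal cells of $\mathcal{T}'$ are not in bijection with the minimal generators of $M_G^{(1)}$ (and your claim that ``for $i,k$ in distinct blocks the relevant coordinates remain generic'' is false under your stated hypothesis). The fix is exactly what the paper does: make $c$ constant on each block \emph{separately}, with distinct constants for distinct blocks, and with the non-block coordinates taking yet other distinct values.

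A smaller point: in the cocellular picture used in the statement, the minimal generators correspond to the \emph{maximal} cells of the subdivision of $\mathbb{R}^{n-1}$, not to its $0$-cells. Your language (``vertices labeled $x_i^{n-1}x_k^{n-1}$'', ``surviving $0$-cells in bijection with the minimal generators'') suggests you are implicitly passing to the dual mixed-subdivision picture without saying so; either viewpoint is fine, but you should be consistent.
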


The rest of the paper is organized as follows.  In Section \ref{sec:basics} we review the basic definitions of the $G$-parking function ideals, their skeleta and related constructions, and also recall some basic facts from combinatorial commutative algebra.  In Section \ref{sec:stdmon} we study the standard monomials of the skeleta ideals, and provide explicit formulas for the case of $M_G^{(1)}$ for the complete graph.  We relate this count to the signless Laplacian and provide a conjectural inequality for arbitrary graphs.  In Section \ref{sec:res} we study homological properties of the skeleta ideal, and in particular establish our results regarding cocellular resolutions of $M_G^{(1)}$ for certain $G$.  We end with some concluding remarks and open questions.

\noindent
{\bf Acknowledgements.}   The results described here grew out of discussions initiated at a workshop on `Generalizations of chip-firing and the critical group' held July 8-12, 2013 at the American Institute of Mathematics.   We thank the organizers for the opportunity to participate and AIM for providing the working environment.  It was at this workshop that Spencer Backman proposed the study of skeleton (and more general) ideals, based on his work involving hereditary chip-firing.  We thank him for introducing us to the subject and for sharing his computations that lead to useful insights. We also thank Sam Hopkins and Suho Oh for useful conversations, as well as two anonymous referees for helpful comments on an earlier draft of the paper.

\section{Definitions and objects of study}
\label{sec:basics}

\subsection{$G$-parking functions, ideals, and their skeleta}
We begin with some basic facts regarding the combinatorial objects involved in our study. Recall that a \defi{parking function of size $n$} is a sequence $(a_1, a_2, \dots, a_n)$ of nonnegative integers such that its rearrangement $c_1 \leq c_2 \leq \cdots \leq c_n$ satisfies $c_i < i$.  This seemingly innocent construction turns out to have connections and applications to many areas of mathematics.  For instance, in \cite{Kre} it is shown that the parking functions of size $n$ are in bijection with the number of trees on $n+1$ labeled vertices which, by Cayley's formula, is given by $(n+1)^{n-1}$.

One can extend these constructions to arbitrary graphs (with the classical case recovered by the complete graph $K_{n+1}$).  For this suppose that $G$ is a simple undirected graph on vertex set $\{0,1,\dots,n\}$ with distinguished `sink' vertex $0$ (much of the theory can be extended to directed graphs with multiple edges, etc. but here we will focus on the simple undirected case).  If $v$ and $w$ are vertices of $G$ we write $v \sim w$ to indicate that $v$ is adjacent to $w$.  We write $\deg(v)$ to denote the \emph{degree} of the vertex $v$, by definition the number of vertices in $G$ that are adjacent to $v$. Associated to a (simple) graph $G$ is the \defi{reduced Laplacian matrx} ${\mathcal L}_G$, an $n \times n$ symmetric matrix with entries given by 

\[(\tilde {\mathcal L}_G)_{i,j} =
\begin{cases}
      \deg(v_i)  & \text{if }i=j \\
      -1 & \text{if } i \neq j \text{ and } v_i \sim v_j \\
      0 & \text{otherwise.}
      \end{cases}
     \]

The \emph{matrix-tree theorem} says that the number $N_G$ of spanning trees of $G$ is given by $N_G = \det \tilde {\mathcal L}_G$.  Our definition of $\tilde {\mathcal L}$ corresponds to deleting from the usual Laplacian ${\mathcal L}_G$ the row and column corresponding to the vertex $0$, but is well known that $N_G$ is independent of which vertex is chosen.  For a subset $\sigma \subseteq [n] = \{1,2,\dots, n\}$ and a vertex $i \in [n]$, let
\begin{equation}\label{eq:dsigma}
d_\sigma(i) = |\{j:  j \sim i, j \notin \sigma\}|
\end{equation}

\begin{defn}
A sequence $(b_1,b_2, \dots, b_n)$ is said to be a \defi{$G$-parking function} if for any $\emptyset \neq \sigma \subseteq [n]$ there exists $i \in \sigma$ such that $b_i < d_S(i)$.  \end{defn}
\noindent
Note that if $G = K_{n+1}$ a complete graph then this recovers the classical parking functions of size $n$ defined above. In addition we have the following enumerative property.

\begin{thm}\cite{Gab}  
The number of $G$-parking functions is given by $\det \tilde {\mathcal L}_G$, the number of spanning trees of $G$.
\end{thm}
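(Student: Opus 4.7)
My plan is to exploit the algebraic setup already in place, rather than attempt a direct combinatorial bijection. By the very definition of $M_G$, a monomial $x_1^{b_1}\cdots x_n^{b_n}$ is a standard monomial of $M_G$ if and only if, for every nonempty $\sigma \subseteq [n]$, the generator $m_\sigma = \prod_{i \in \sigma} x_i^{d_\sigma(i)}$ fails to divide $x^{\mathbf{b}}$; equivalently, some $i \in \sigma$ satisfies $b_i < d_\sigma(i)$. This is precisely the $G$-parking function condition. Hence the problem reduces to showing $\dim_{\mathbb{K}} S/M_G = \det \tilde{\mathcal{L}}_G$.

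To compute this dimension I would pass to the toppling lattice ideal
\[
I_G \;=\; \langle\, x^{\mathbf{a}^+} - x^{\mathbf{a}^-} \,:\, \mathbf{a} \in \tilde{\mathcal{L}}_G \mathbb{Z}^n \,\rangle \;\subset\; S,
\]
and fix a generic monomial order $\prec$ refining a strictly positive weight in the cone spanned by the rows of $\tilde{\mathcal{L}}_G$. For each nonempty $\sigma \subseteq [n]$, summing the rows of $\tilde{\mathcal{L}}_G$ indexed by $\sigma$ produces an integer vector whose positive part is $\sum_{i \in \sigma} d_\sigma(i)\, e_i$ and whose negative part is supported outside $\sigma$, yielding a binomial in $I_G$ with $\prec$-leading term exactly $m_\sigma$. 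The central step is then to verify that these $2^n - 1$ binomials form a Gr\"obner basis of $I_G$, so that $\operatorname{in}_\prec(I_G) = M_G$. Since Hilbert series are preserved under passage to initial ideals, this gives $\dim_{\mathbb{K}} S/M_G = \dim_{\mathbb{K}} S/I_G$, and the latter is the index of the sublattice $\tilde{\mathcal{L}}_G \mathbb{Z}^n \subset \mathbb{Z}^n$, which equals $|\det \tilde{\mathcal{L}}_G|$. The matrix-tree theorem then identifies this with the number $N_G$ of spanning trees of $G$.

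The main obstacle is the Gr\"obner basis claim $\operatorname{in}_\prec(I_G) = M_G$: one must verify simultaneously for all subsets that every S-pair reduction terminates inside the candidate initial ideal, or equivalently that any two monomials in the same coset modulo $\tilde{\mathcal{L}}_G \mathbb{Z}^n$ are connected by a chain of admissible reductions that preserve monomial order. An alternative route that avoids commutative algebra entirely is to construct a direct bijection between $G$-parking functions and spanning trees via Dhar's burning algorithm: starting from a parking function $\mathbf{b}$, iteratively burn the sink $0$ and any unburned vertex $v$ whose number of already-burned neighbors exceeds $b_v$, and record the triggering edge at each step to assemble a spanning tree. Either approach establishes the theorem, but the algebraic one integrates more seamlessly with the rest of the paper and its subsequent focus on the skeleton ideals $M_G^{(k)}$.
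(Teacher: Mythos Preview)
The paper does not supply a proof of this theorem at all: it is stated with a citation to Gabrielov \cite{Gab} and used as background for introducing the ideals $M_G$. So there is no in-paper argument to compare your proposal against.

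That said, your outline is a sound and well-known route. The identification of standard monomials of $M_G$ with $G$-parking functions is immediate from the definitions, and the passage through the toppling lattice ideal $I_G$ is exactly the mechanism the paper alludes to when it remarks that $M_G$ is ``a certain initial ideal of the so-called \emph{toppling ideal} $I_G$''. You are right that the Gr\"obner basis verification $\operatorname{in}_\prec(I_G) = M_G$ is where the real work lies; this has been carried out in the literature (e.g.\ Cori--Rossin--Salvy, Manjunath--Sturmfels, Perkinson--Perlman--Wilmes), but it is not a one-line check. One additional point you gloss over: the assertion $\dim_{\mathbb K} S/I_G = [\mathbb{Z}^n : \tilde{\mathcal L}_G\mathbb{Z}^n]$ also needs justification---it requires that the lattice be full rank (true since $G$ is connected, so $\tilde{\mathcal L}_G$ is nonsingular) and that $I_G$ be the saturated lattice ideal, both of which should be stated or cited rather than assumed.

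Your alternative via Dhar's burning algorithm is closer to how this result is usually proved in the chip-firing literature and is more self-contained: it directly exhibits a bijection between $G$-parking functions and spanning trees without any commutative algebra, and then the matrix-tree theorem closes the loop. Given that the present paper merely cites the result, either route would be acceptable as a supplied proof, but the burning-algorithm argument is shorter and avoids the Gr\"obner obstacle you flag.
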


In \cite{PosSha} the authors formulate $G$-parking functions in an algebraic context, As above we assume that $G$ is an (simple, undirected) graph on vertex set $\{0,1,\dots,n\}$ with distinguished sink vertex $0$.  Fix a field ${\mathbb K}$ and let $S = {\mathbb K}[x_1, x_2, \dots, x_n]$ denote the polynomial ring on $n$ variables.  For any $\emptyset \neq \sigma \subseteq [n]$  define the monomial $m_\sigma$ by 
\begin{equation}
m_\sigma =  \prod_{i \in \sigma} x_i^{\outdeg_\sigma(i)},
\end{equation}
where $\outdeg_\sigma(i) = \# \{j \in [n] \backslash \sigma : j \sim i\}$ is the number of vertices outside the set $\sigma$ that are adjacent to $i$.

\begin{defn}
For a graph $G$ on vertex set $\{0,1,\dots, n\}$ the $G$-parking function ideal $M_G \subset S$ is generated by all $m_\sigma$ for nonempty $\sigma$:
\[M_G = \langle m_\sigma: \emptyset \neq \sigma \subseteq [n] \rangle.\]
\end{defn}

The ideals $M_G$ were introduced by Postnikov and Shapiro in \cite{PosSha}, where they studied monomial bases for the quotient algebra $S/M_G$ and also described cellular resolutions of these and related ideals.   The ideals $M_G$ also have connections to `chip-firing' and a discrete Riemann-Roch theory of graphs.  In fact it can be shown that $M_G$ is a certain initial ideal of the so-called \emph{toppling ideal} $I_G$, the binomial lattice ideal parametrizing linear equivalence classes of effective divisors on $G$. 

In the context of chip-firing one studies the following dynamical system: a number of `chips' are placed on the vertices of a graph $G$, and a vertex $v$ is allowed to `fire' if the number of chips is at least $\deg(v)$ (the degree of that vertex), in which case one chip is passed to each of its neighbors and $v$ loses $\deg(v)$ chips.  In the \emph{Abelian sandpile model} vertices are restricted to fire individually, whereas in the `cluster firing model' any collection of non-sink vertices is allowed to fire simultaneously.  In both models the stabilization of a configuration is independent of the sequence of firings, and the number of `recurrent' configurations is the same (given by the number of spanning trees).  In the case of the cluster firing method, the recurrent configurations are the same as the stable configurations.

In more recent work several authors (see for instance \cite{Bac}, \cite{CarPaoSpo}) have studied `hereditary chip-firing' models, where the prescribed sets of vertices form a \emph{simplicial complex} on the vertex set of $G$.  In \cite{Bac} it is shown that these models enjoy similar properties to the classical setup, and for example by adapting the Cori-Le Borgne algorithm from \cite{CorLeB} an explicit bijection between the recurrent configurations of a hereditary chip-firing model on a graph and its spanning trees is given. 

In this paper we consider the algebraic aspects of hereditary chip-firing in the context of the monomial ideals defined above.    Although a general theory seems difficult to describe, we do see some pleasing structure arising in the case of subideals defined by `skeleta'.  The following will be our main objects of study.



\begin{defn}
Suppose $G$ is a graph on vertex set $\{0,1,\dots, n\}$. For an integer $k$ with $1 \leq k \leq n-1$, we define the ideal $M_G^{(k)}$ according to
\[M_G^{(k)} = \langle m_\sigma : \emptyset \neq \sigma \subseteq [n], |\sigma| \leq k+1 \rangle.\]
\end{defn}

One can think of the $M_G^{(k)}$ as certain `$k$-skeleta' of the ideal $M_G$. For example for $G = K_5$ and $k=1$ we have
\[M_G^{(1)} = \langle x_1^4, x_2^4, x_3^4, x_4^4, x_1^3x_2^3, x_1^3x_3^3,  x_1^3x_4^3, x_2^3x_4^3, x_2^3x_4^3, x_3^3x_4^3 \rangle, \]
generated by all monomials $m_\sigma$ with $1 \leq |\sigma| \leq 2$.

\noindent
{\bf Notation.}  To simplify notation we will often use $M_n^{(k)} \coloneqq M_{K_{n+1}}^{(k)}$ to denote the $k$-skeleton ideal of the complete graph $G = K_{n+1}$.

We note that the ideals $M_{G}^{(k)}$ are non-generic: for example the monomials $x_1^3x_2^3$ and $x_1^3x_3^3$ are both generators of the ideal $M_{K_5}^{(1)}$ with the same positive degree in $x_1$, and yet the only other generator dividing their least common multiple $x_1^3x_2^3x_3^3$ is the monomial $x_2^3x_3^3$.  Hence certain techniques for studying monomial ideals (as were utilized in \cite{PosSha}) break down.  In addition, the Scarf complex associated to these ideals will not support a resolution.  For the case of a complete graph, the Scarf complex of $M_{n}^{(k)}$ will consist of the $(k-1)$-skeleton of the barycentric subdivision of the $(n-1)$-dimensional simplex, and for $k \leq n-1$ will carry a nontrivial top homology group. 

These ideals are also not \emph{monotone} in the sense of \cite{PosSha} (despite the apparent similarity in the names/construction).  In that context, for any generators $m_\sigma$ and $m_\tau$ one requires that $\lcm(m_\sigma, m_\tau)$ is divisible by a generator $m_\rho$ for some $\rho \supseteq \sigma \cup \tau$.  Our construction of $M_G^{(k)}$ provides a condition on \emph{subsets} of $\sigma$ for each $m_\sigma$, and in this sense runs somewhat dual to the theory of monotone ideals.

\subsection{Resolutions and monomial bases}

We briefly recall some of the commutative algebra needed for our constructions and results.  Further detail and any undefined terms can be found for example in \cite{MilStu}.

Recall that if $M$ is any finely graded $S$-module, then a (${\mathbb Z}^n$-graded) \defi{free resolution} of $M$ is a sequence of free $S$-modules
\[0 \leftarrow M \leftarrow F_1 \leftarrow F_2 \leftarrow \cdots \leftarrow F_d \leftarrow 0,\]
\noindent
where each $F_ i = \bigoplus_{m \in {\mathbb Z}^n} S(-m)^{\beta_{i,m}}$.  The sum is over all monomials $m$ in $S$, which are often (as in this case) described in terms of their exponent vector in ${\mathbb Z}^n$. The resolution is said to be \defi{minimal} if each of the $\beta_{i,m}$ is minimum over all free resolutions of $M$, in which case the $\beta_{i,m}$ are called the (${\mathbb Z}^n$-, or finely-)graded \defi{Betti numbers} of $M$.

A resolution of a monomial ideal $M$ said to be \defi{cellular} (resp. \defi{cocellular}) if there exists a $CW$-complex ${\mathcal X}$ with monomial labelings $m_F$ on its faces $F \in {\mathcal X}$ such that the chain complex computing cellular homology (resp. cohomology) `supports' the resolution.  More specifically, suppose ${\mathcal X}$ is a CW-complex with a monomial associated to each face $\{m_F:F \in {\mathcal X} \}$ satisfying $m_G | m_F$ for any pair of faces $G \subset F$. We define a complex $C_*({\mathcal X})$ of free $S$-modules according to
\[C_i = \bigoplus_{F \in {\mathcal X}, \; \dim F = i+1} S(-m_F)\]
where the sum is over all $(i+1)$-dimensional faces $F$ of ${\mathcal X}$.  The differentials $\partial_i:C_i \rightarrow C_{i-1}$ of $C_*({\mathcal X})$ are monomial-valued matrices with scalar coefficients determined by the entries in the maps of the chain complex of ${\mathbb K}$-modules computing homology of ${\mathcal X}$, and with monomial entries that make the differential a ${\mathbb Z}^n$-graded map.  A cocellular resolution is defined similarly using the cochain complex $C^*({\mathcal X})$ of the underlying space. Note that in the case of a cellular resolution the monomial ideal is generated by the monomials on the vertices (0-cells), whereas in the case of a cocellular complex is generated by the facets (maximal faces).   


Given a monomial labeled complex ${\mathcal X}$ there is a useful criteria for checking where it supports a resolution. For a monomial $m$, let ${\mathcal X}_{\leq m}$ denote the subcomplex of ${\mathcal X}$ consisting of those faces $F$ whose label $m_F$ divides $m$:
\[{\mathcal X}_{\leq m} = \{F \in {\mathcal X}: \text{$m_F$ divides $m$}\}.\]
From \cite{BayStu} we then have the following lemma.

\begin{lemma} 
The complex $C_*({\mathcal X})$ of $S$-modules is exact if and only if the induced complex ${\mathcal X}_{\leq m}$ is ${\mathbb K}$-acyclic for all monomials $m$.  In this case $C_*({\mathcal X})$ supports a cellular resolution of the ideal 
\[I = \langle m_v: \text{$v \in {\mathcal X}$ a vertex}\rangle,\]
\noindent
generated by the monomials on the vertices of ${\mathcal X}$.  If in addition we have that $m_F \neq m_G$ for any proper containment $F \subsetneq G$ of faces then $C_*({\mathcal X})$ is a \emph{minimal} free resolution of $I$.
\end{lemma}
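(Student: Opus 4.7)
The plan is to exploit the $\mathbb{Z}^n$-grading on $C_*({\mathcal X})$. Each $C_i = \bigoplus_F S(-m_F)$ is $\mathbb{Z}^n$-graded and the differentials respect the grading, so exactness of $C_*$ (augmented by the surjection onto $I$) is equivalent to exactness of each graded strand $(C_*)_a$, $a \in \mathbb{Z}^n$, viewed as a complex of $\mathbb{K}$-vector spaces. First I would identify this strand explicitly: the degree-$a$ piece of $S(-m_F)$ is a one-dimensional $\mathbb{K}$-vector space precisely when $m_F$ divides $x^a$, and is zero otherwise. Hence $(C_i)_a$ has a $\mathbb{K}$-basis indexed by the $(i+1)$-dimensional faces of the subcomplex ${\mathcal X}_{\leq x^a}$.

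Next I would match the differentials. By construction the scalar coefficients of $\partial_i$ are the incidence numbers of the cellular chain complex of ${\mathcal X}$; after restricting to degree $a$, the monomial factors $m_F/m_G$ on the nonzero entries evaluate to the constant $1 \in \mathbb{K}$. So $(C_*)_a$ becomes, up to a homological shift, the augmented cellular chain complex of ${\mathcal X}_{\leq x^a}$ with $\mathbb{K}$ coefficients. Therefore $(C_*)_a$ is exact if and only if the reduced homology of ${\mathcal X}_{\leq x^a}$ vanishes, i.e.\ ${\mathcal X}_{\leq x^a}$ is $\mathbb{K}$-acyclic. (When $x^a$ is not divisible by any vertex label, ${\mathcal X}_{\leq x^a}$ is empty and exactness is automatic.) Ranging over all $a$ yields the first equivalence.

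For the generation statement, $C_0 = \bigoplus_{v \text{ vertex}} S(-m_v)$ maps onto $I$ by sending each generator to $m_v$, so once $C_*$ is a resolution, $I = \langle m_v : v \in {\mathcal X} \text{ a vertex} \rangle$. For minimality, the criterion is that no entry of any differential matrix is a unit of $S$. The nonzero entries of $\partial$ correspond to incidences $G \subsetneq F$ of consecutive dimension and have the form $\pm(m_F/m_G)$; under the hypothesis $m_F \neq m_G$ for every proper face containment (and in particular every covering pair), each ratio lies in the maximal ideal $\langle x_1, \dots, x_n \rangle$, so the resolution is minimal.

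The main obstacle I anticipate is the bookkeeping in the second paragraph: verifying that the signed incidence coefficients in the cellular chain complex of ${\mathcal X}_{\leq x^a}$ match, with compatible orientations, the restriction of the monomial-weighted differentials of $C_*$. This is routine once orientations on the cells of ${\mathcal X}$ are fixed once and for all, but it is the step where one must be careful to avoid a sign error that would spoil the identification of graded strands with cellular chain complexes.
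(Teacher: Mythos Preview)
Your argument is correct and is essentially the standard Bayer--Sturmfels proof. Note, however, that the paper does not supply its own proof of this lemma: it is quoted directly from \cite{BayStu} (``From \cite{BayStu} we then have the following lemma''), so there is no in-paper proof to compare against. What you have written is precisely the approach of that reference: pass to the $\mathbb{Z}^n$-graded strands, identify each strand with the reduced cellular chain complex of ${\mathcal X}_{\leq m}$ over $\mathbb{K}$, and read off minimality from the absence of unit entries in the differentials.
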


If the labeling on ${\mathcal X}$ satisfies 
\[m_F = \max\{m_G: \text{for $G \supset F$ a face}\}\]
then ${\mathcal X}$ is said to be \defi{colabeled} and gives rise to a cocellular resolution via the cellular cochain complex of ${\mathcal X}$.   Note that for any monomial $m$, the set ${\mathcal X}_{\leq m}$ of cells $H$ with $m_H$ dividing $m$ is not a subcomplex of ${\mathcal X}$, but rather a union of relatively open stars of cells.  The criteria for checking acyclicity of the complex is similar. 

\begin{lemma}\label{lem:cocell}
The colabeled complex $C^*({\mathcal X})$ is exact if and only if  ${\mathcal X}_{\leq m}$ is ${\mathbb K}$-acyclic for every monomial $m \in {\mathbb Z}^n$ (identifying $m$ with its exponent vector).  In this case the $C^*({\mathcal X})$ provides a resolution of  $S/I$, where
\[I = \langle m_H: \text{$H \in {\mathcal X}$ is a maximal cell} \; \rangle.\]
The resolution is minimal if $m_F \neq m_G$ for any proper containment $F \subsetneq G$ of faces.
\end{lemma}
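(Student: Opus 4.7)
The plan is to mimic the Bayer--Sturmfels argument for the cellular case stated just above, dualizing appropriately for the cocellular setting. The key point is that $C^*({\mathcal X})$ is $\mathbb{Z}^n$-graded, because each summand $S(-m_F)$ is, and so exactness of $C^*({\mathcal X})$ as a complex of $S$-modules is equivalent to exactness of each graded strand $(C^*({\mathcal X}))_m$ as a complex of $\mathbb{K}$-vector spaces, for every $m \in \mathbb{Z}^n$. Since $(S(-m_F))_m = \mathbb{K}$ precisely when $m_F \mid m$, the strand in degree $m$ is the $\mathbb{K}$-span of the basis elements indexed by faces $F \in {\mathcal X}_{\leq m}$.

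The next step is to match this strand, up to a degree shift, with the cochain complex of ${\mathcal X}_{\leq m}$. Here the colabeling condition $m_F = \max\{m_G : G \supset F\}$ plays the role that the monotone divisibility condition plays in the cellular case: it forces $m_G \mid m_F$ whenever $F \subset G$, so ${\mathcal X}_{\leq m}$ is upward-closed in the face poset---that is, a union of relatively open stars, with complement a bona fide subcomplex of ${\mathcal X}$. Comparing the cocellular coboundary (whose entry from $G$ to $F \subsetneq G$ carries the monomial ratio $m_F/m_G$) against the $\mathbb{K}$-linear cellular cochain differential of ${\mathcal X}$ restricted to ${\mathcal X}_{\leq m}$, one verifies that they coincide up to the prescribed signs; hence $\mathbb{K}$-acyclicity of ${\mathcal X}_{\leq m}$ for every monomial $m$ is equivalent to exactness of $C^*({\mathcal X})$.

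Given exactness, the augmentation $\bigoplus_{H \text{ a facet}} S(-m_H) \twoheadrightarrow S/I$ determined by the facet labels identifies $C^*({\mathcal X})$ as a free resolution of $S/I$. For minimality I would invoke the standard criterion that a $\mathbb{Z}^n$-graded free resolution is minimal if and only if every differential entry lies in the graded maximal ideal $\mathfrak{m} = (x_1, \ldots, x_n)$; under the hypothesis $m_F \neq m_G$ for $F \subsetneq G$, the coboundary entry $\pm m_F/m_G$ is a nontrivial monomial and thus lies in $\mathfrak{m}$. The main technical delicacy is the identification in the second step: because ${\mathcal X}_{\leq m}$ is not itself a subcomplex of ${\mathcal X}$ but rather a union of open stars, one must interpret the relevant reduced cohomology carefully---most cleanly via the relative cochain complex $C^*({\mathcal X}, {\mathcal X} \setminus {\mathcal X}_{\leq m})$---and track the degree shifts and sign conventions consistently with the cellular cochain complex.
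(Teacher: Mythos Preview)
The paper does not actually prove this lemma: it is stated as a known result, parallel to the cellular version attributed to \cite{BayStu} just above it, with \cite{MilStu} given earlier as the general reference for background. Your sketch is the standard argument one finds in those references, and it is correct; in particular you have identified the key technical point that the colabeling condition forces ${\mathcal X}_{\leq m}$ to be upward-closed rather than a subcomplex, so that the relevant acyclicity is most cleanly phrased via relative cochains. Since there is no proof in the paper to compare against, there is nothing further to add.
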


In \cite{PosSha} Postnikov and Shapiro show that the $G$-parking function ideals $M_G$ have cellular resolutions that are supported on ${\mathcal B}(\Delta_n)$, the barycentric subdivision of an $(n-1)$-dimensional simplex.  They generalize this to the case of the order complex of marked posets supporting what they call \emph{order monomial ideals}.  Note that in all cases the underyling complex is a \emph{simplicial} complex.

Suppose $I \in S$ is an ideal such that $S/I$ is finite dimensional as a ${\mathbb K}$-vector space.  Then we say that $I$ is an \emph{Artinian} ideal and a basis for $S/I$ is called the set of \defi{standard monomials} of $I$ (determined by a chosen term order).  In the case that $I$ is monomial, one can see that the standard monomials consist of those monomials $m$ such that $m$ is not divisible by any generators of $I$.  


\section{Monomial bases} \label{sec:stdmon}

Recall that the standard monomials of $M_G$ (a basis for the vector space $S/M_G$) are given by the $G$-parking functions (and hence counted by spanning trees of $G$).  For the case of $G = K_{n+1}$ the number of standard monomials is thus given by $(n+1)^{n-1}$.  In this section we consider the standard monomials of the 1-skeleton ideals, seeking an analogous combinatorial interpretation.   In \cite{DocPar} a formula for the $(n-2)$-skeleton ideal is determined, where a connection to certain `uprooted trees' is also explored.  Note that in general since  $M_G^{(k)} \subset M_G$  the usual $G$-parking functions will be among the standard monomials for the $k$-skeleton ideals for all $k$.  

Before turning to the $1$-skeleton, note that for any graph $G$ the zero-skeleton $M_G^{(0)}$ is generated by each variable raised to the power of the degree of the corresponding vertex.  Hence a monomial basis for $S/M_G^{(0)}$ is easy to describe as the set of monomials lying within an $n$-orthotope with edge lengths given by degree:
\[\Big \{\prod_{i \in [n]} x_i^{d_i}: 0 \leq d_i < \deg(i) \Big\}\]

For the remainder of this section we will mostly focus on the 1-skeleton ideals for the case $G=K_{n+1}$ is a complete graph.  Recall that we use $M_n^{(1)} \coloneqq M_{K_{n+1}}^{(1)}$ to denote the $1$-skeleton ideal of the complete graph $G = K_{n+1}$.

\begin{example}\label{ex:K4oneskel}
For $n=3$ we have $M_3^{(1)} = \langle x_1^3, x_2^3, x_3^3, x_1^2x_2^2, x_1^2x_3^2, x_2^2x_3^2 \rangle$, with the set of standard monomials given by 
\[\{\textrm{$G$-parking functions of length 3}\} \cup \{x_1x_2x_3, x_1^2x_2x_3, x_1x_2^2x_3, x_1x_2x_3^2\},\]
\noindent
giving a total of $16+4 = 20$ standard monomials.   Here as usual we identify a $G$-parking function with the monomial having that sequence as an exponent vector, so that for example $(1,0,2)$ is identified with the monomial $x_1x_3^2$.
\end{example}

In general we have the following formula.

\begin{thm}\label{thm:standardoneskel}
The number of standard monomials of $M_{n}^{(1)}$ (and hence the dimension of the  $\mathbb K$-vector space $S/M_n^{(1)}$) is given by
\[\dim_{\mathbb K}(S/M_n^{(1)}) = (2n-1)(n-1)^{n-1}.\]
\end{thm}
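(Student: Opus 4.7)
The plan is to determine the minimal generators of $M_n^{(1)}$ explicitly, translate the standard-monomial condition into a simple combinatorial constraint on exponent vectors, and then count directly.

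First I would observe that for $G = K_{n+1}$ the generators of $M_n^{(1)}$ have a very rigid form. Applied to $K_{n+1}$, the definition gives $m_{\{i\}} = x_i^n$ for each $i \in [n]$, since $i$ has exactly $n$ neighbors outside $\{i\}$ (namely the sink $0$ together with the other $n-1$ non-sink vertices). Similarly, for any pair $\{i,j\} \subseteq [n]$ both $i$ and $j$ have exactly $n-1$ neighbors outside $\{i,j\}$, so $m_{\{i,j\}} = x_i^{n-1} x_j^{n-1}$. Therefore
\[
M_n^{(1)} = \langle x_i^n : i \in [n] \rangle + \langle x_i^{n-1} x_j^{n-1} : 1 \leq i < j \leq n \rangle,
\]
and this set of generators is minimal, since none divides another.

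Next I would characterize standard monomials. A monomial $x_1^{a_1} \cdots x_n^{a_n}$ fails to lie in $M_n^{(1)}$ precisely when (i) no $x_i^n$ divides it, i.e.\ $a_i \leq n-1$ for every $i$, and (ii) no $x_i^{n-1} x_j^{n-1}$ divides it, i.e.\ the set $T(a) := \{i \in [n] : a_i = n-1\}$ has cardinality at most $1$. Equivalently, each coordinate lies in $\{0,1,\dots,n-1\}$ and at most one coordinate achieves the maximum value $n-1$.

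Finally I would count by partitioning on $|T(a)|$. If $|T(a)| = 0$, every $a_i$ independently ranges over $\{0,1,\dots,n-2\}$, giving $(n-1)^n$ vectors. If $|T(a)| = 1$, there are $n$ choices for the distinguished coordinate equal to $n-1$, and each of the remaining $n-1$ coordinates independently ranges over $\{0,1,\dots,n-2\}$, giving $n(n-1)^{n-1}$ vectors. Summing,
\[
\dim_{\mathbb K}(S/M_n^{(1)}) = (n-1)^n + n(n-1)^{n-1} = (n-1)^{n-1}\bigl((n-1)+n\bigr) = (2n-1)(n-1)^{n-1},
\]
as claimed. There is no real obstacle here: the only step with any content is identifying the generators, after which the count is immediate, and the formula already matches the $n=3$ tally of $20$ from Example~\ref{ex:K4oneskel}.
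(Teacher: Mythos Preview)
Your proof is correct and follows essentially the same route as the paper: characterize the standard monomials as exponent vectors with all entries at most $n-1$ and at most one entry equal to $n-1$, then split the count according to whether zero or one coordinates attain $n-1$. Your write-up is slightly more explicit in deriving the generators and the divisibility conditions, but the argument is the same.
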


\begin{proof}
Suppose $(a_1, a_2, \dots, a_n)$ is the exponent vector of a standard monomial of $M_n^{(1)}$.  Then by definition each entry $a_i$ is strictly less than $n$, and at most one entry equals $n-1$.  

If no entry equals $n-1$ then every entry satisfies $0 \leq a_i \leq n-2$ so we have $(n-1)^n$ possibilities. If exactly one entry (say $a_i$) equals $n-1$ then every other entry satisfies $0 \leq a_j \leq n-2$ for $j \neq i$.  Hence we have $n(n-1)^{n-1}$ possibilities.  Adding these up we get
\[(n-1)^n + n(n-1)^{n-1} = (2n-1)(n-1)^{n-1}\]
standard monomials, as desired.
\end{proof}

It turns out this number has a determinantal interpretation analogous to the case of $G$-parking functions.  Once again recall that the dimension of $S/M_{K_G}$ (and the number of $G$-parking functions) is equal to $\det \tilde {\mathcal L}_G$, the determinant of the reduced Laplacian of $G$.  For the case of $G = K_{n+1}$ this number is provided by the well known formula $(n+1)^{n-1}$.  For the one-skeleton a different but related matrix makes an appearance.

\begin{defn}
For a graph $G$ on vertex set $\{0,1, \dots, n\}$ the \defi{signless Laplacian} ${\mathcal Q} = {\mathcal Q}_G$ is the symmetric $(n+1) \times (n+1)$ matrix with rows and columns indexed by the vertices of $G$ and with entries given by 
\[{\mathcal Q}_{i,j}= \begin{cases} \deg(i) &\mbox{if } i=j, \\ 
|\{\text{edges connecting $i$ and $j$}\}| & \mbox{if } i \neq j. \end{cases}\]
Define the \defi{reduced signless Laplacian} $\tilde {\mathcal Q}$ to be the matrix obtained from ${\mathcal Q}$ by deleting the row and column corresponding to the vertex 0.
\end{defn}

Note that ${\mathcal Q}$ has entries given by the absolute values of the entries of the usual Laplacian ${\mathcal L}$ (hence the name). For example, if $G=K_4$ we get the following matrices.

\[{\mathcal Q} = \begin{bmatrix}
3 & 1 & 1 &1 \\
1 & 3 & 1 & 1 \\
1 & 1 & 3 & 1\\
1 & 1 & 1 & 3
\end{bmatrix}
\hspace{.5 in}
\tilde {\mathcal Q }= \begin{bmatrix}
3 & 1 & 1 \\
1 & 3 &  1 \\
1 & 1 & 3 
\end{bmatrix}\]
\noindent
In this case one has $\det \tilde {\mathcal Q} = 20 = (5)(2^2)$.  This should remind the reader of the count from Example \ref{ex:K4oneskel} above, and in general we have the following observation.

\begin{cor}\label{cor:standardoneskel}
The number of standard monomials of $M_{n}^{(1)}$ is given by 
\[\dim_{\mathbb K}(S/M_n^{(1)}) = \det \tilde Q_{K_{n+1}}. \]
\end{cor}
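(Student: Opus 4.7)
The plan is to combine the already-established count $(2n-1)(n-1)^{n-1}$ from Theorem \ref{thm:standardoneskel} with a direct eigenvalue computation of $\det \tilde{\mathcal Q}_{K_{n+1}}$, and verify the two numbers agree.

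First I would write down the matrix explicitly. By definition, in $K_{n+1}$ every vertex has degree $n$, and every pair of distinct vertices in $[n]$ is joined by exactly one edge. Hence
\[
\tilde{\mathcal Q}_{K_{n+1}} = (n-1)\, I_n + J_n,
\]
where $I_n$ is the $n\times n$ identity and $J_n$ is the all-ones $n\times n$ matrix.

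Next I would diagonalize. The matrix $J_n$ has rank $1$ with spectrum $\{n, 0, 0, \dots, 0\}$: the all-ones vector $(1,1,\dots,1)^T$ is an eigenvector with eigenvalue $n$, and the hyperplane orthogonal to it is the $0$-eigenspace. Shifting by $(n-1)I_n$ therefore gives $\tilde{\mathcal Q}_{K_{n+1}}$ the eigenvalues $2n-1$ (with multiplicity $1$) and $n-1$ (with multiplicity $n-1$). Taking the product of eigenvalues yields
\[
\det \tilde{\mathcal Q}_{K_{n+1}} = (2n-1)(n-1)^{n-1}.
\]

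Finally I would invoke Theorem \ref{thm:standardoneskel} to identify this quantity with $\dim_{\mathbb K}(S/M_n^{(1)})$, completing the proof. There is essentially no obstacle here; the whole argument is a two-line eigenvalue computation sitting on top of the enumeration already proved. If one prefers a more elementary route, the same determinant can be obtained by row reduction: subtract the first row from each of the others to get a lower-triangular-plus-rank-one matrix whose determinant is visibly $(2n-1)(n-1)^{n-1}$, avoiding any mention of eigenvalues.
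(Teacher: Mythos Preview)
Your proof is correct and follows essentially the same approach as the paper: both invoke Theorem \ref{thm:standardoneskel} and then compute $\det \tilde{\mathcal Q}_{K_{n+1}}$ via the decomposition $\tilde{\mathcal Q}_{K_{n+1}} = (n-1)I_n + J_n$ and the eigenvalues of $J_n$. The extra row-reduction remark is a nice alternative not mentioned in the paper, but the core argument is identical.
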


\begin{proof}
According to Theorem \ref{thm:standardoneskel} it is enough to show that $\det \tilde{\mathcal Q}_{K_{n+1}} = (2n-1)(n-1)^{n-1}$. For this we examine the eigenvalues of the matrix $\tilde Q_{K_{n+1}}$.  We have one eigenvalue $2n - 1$ with multiplicity 1 corresponding to the all 1's vector ${\bf 1}$.  Subtracting the matrix $(n-1)I_n$ from $\tilde Q_{K_{n+1}}$ gives us the matrix $J$ consisting of all ones, which has an $(n-1)$-dimensional kernel.  Hence $\tilde Q_{K_{n+1}}$ has one other eigenvalue $n-1$ with multiplicity $n-1$.  The result follows.
\end{proof}

It would be interesting to find a bijective proof of Corollary \ref{cor:standardoneskel}, extending the well-known bijections between spanning trees and parking functions that are scattered throughout the literature (see for instance \cite{ChePyl}).  For this we need a graph-theoretical interpretation of the determinant of $\tilde {\mathcal Q}$, which we have (again) thanks to the Cauchy-Binnet theorem.

\begin{prop}\cite{Bap}
For any graph $G$ the determinant of $\tilde {\mathcal Q}_G = \tilde {\mathcal Q}$ is given by 
\[\det \tilde {\mathcal Q} = \sum_H 4^{c(H)}, \]
where the summation runs over all spanning $TU$-subgraphs $H$ of $G$ with $c(H)$ unicyclic components, and one tree component which contains the vertex 0.
\end{prop}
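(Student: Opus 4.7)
The plan is to identify a matrix factorization of $\tilde{\mathcal{Q}}$ and apply the Cauchy-Binet formula, exactly mirroring the classical proof of the Matrix-Tree Theorem for $\tilde{\mathcal{L}}$. Specifically, let $N$ be the unsigned vertex-edge incidence matrix of $G$ (rows indexed by $V$, columns by $E$, with $N_{v,e}=1$ iff $v$ is an endpoint of $e$). A direct computation of $NN^T$ shows $\mathcal{Q} = NN^T$, so $\tilde{\mathcal{Q}} = \tilde{N}\tilde{N}^T$ where $\tilde{N}$ is $N$ with the row for vertex $0$ deleted. Cauchy-Binet then gives
\[\det \tilde{\mathcal{Q}} \;=\; \sum_{S} \bigl(\det \tilde{N}_S\bigr)^2,\]
where $S$ ranges over all $n$-element subsets of $E$ and $\tilde{N}_S$ is the corresponding $n\times n$ submatrix.

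Next, I would analyze which $S$ contribute nonzero terms and evaluate $|\det \tilde{N}_S|$ for those. Each $S$ corresponds to a spanning subgraph $H$ of $G$ with $n$ edges on $n+1$ vertices; by Euler's formula, if $H$ has $c$ components then it has exactly $c-1$ independent cycles. For the submatrix to have any chance of nonzero determinant, $H$ must contain no component with two or more independent cycles (otherwise we obtain a dependence among rows on that block), so $H$ is a $TU$-subgraph with $c-1$ unicyclic components and one tree component. After permuting rows and columns according to connected components of $H$, the matrix $\tilde{N}_S$ becomes block-diagonal, so $\det \tilde{N}_S$ factors as a product of determinants of the constituent blocks.

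Then I would compute each block determinant. For the tree component: if it does not contain $0$, then summing the rows of that block produces the all-$2$'s vector on its columns, but the row corresponding to $0$ has been removed in no such row, so a suitable linear combination of rows vanishes and the block determinant (hence $\det \tilde{N}_S$) is $0$; if the tree contains $0$, then removing row $0$ yields a square submatrix which, by a standard leaf-pruning argument on the tree, has determinant $\pm 1$. For each unicyclic block on a cycle of length $\ell$ with possibly attached tree edges, pruning pendant edges reduces the determinant computation to that of the unsigned incidence matrix of an $\ell$-cycle, which expands to $1 + (-1)^{\ell+1}$, i.e., $\pm 2$ when $\ell$ is odd and $0$ when $\ell$ is even. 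Thus the only contributing $H$ are those whose unicyclic parts have odd cycles (the $TU$-subgraphs in the standard sense), and for these $|\det \tilde{N}_S| = 2^{c(H)}$ where $c(H)$ is the number of unicyclic components.

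Squaring and summing yields $\det \tilde{\mathcal{Q}} = \sum_{H} 4^{c(H)}$ over spanning $TU$-subgraphs with the tree component containing $0$, which is the claimed formula. The main obstacle is the careful bookkeeping in Step~3: verifying that the block decomposition is compatible with the row-deletion, ruling out nonzero contributions from subgraphs having components with two or more cycles, and tracking why even cycles kill the determinant while odd cycles contribute $\pm 2$. Once these signs and vanishings are established, the Cauchy-Binet step is purely formal.
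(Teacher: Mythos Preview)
Your proposal is correct and follows exactly the approach indicated in the paper: factor $\mathcal{Q}$ via the signless (0--1) vertex-edge incidence matrix and apply Cauchy--Binet, then analyze the square subminors component-by-component to see that only spanning $TU$-subgraphs with the tree component through vertex $0$ contribute, each with weight $4^{c(H)}$. The paper merely sketches this (citing \cite{Bap} for details), while you have filled in the block-determinant analysis; your exposition around the ``tree component not containing $0$'' case is slightly garbled, but the cleanest way to phrase it is simply that such a block is $k\times(k-1)$, forcing dependent rows in $\tilde N_S$ and hence determinant zero.
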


Here a \defi{$TU$-subgraph} is a subgraph of $G$ whose components are trees or unicylic graphs with odd cycles.  Analogous to the usual Laplacian, ${\mathcal Q}$ can be obtained as ${\mathcal Q} = M^T M$, where $M$ is the $0-1$ vertex-edge \emph{signless} incidence matrix of $G$.  Applying Cauchy-Binet to this factorization gives the result (see \cite{Bap} for details).  


A bijective proof of Corollary \ref{cor:standardoneskel} would therefore associate to each spanning $TU$-subgraph $H \subset K_{n+1}$ a collection of $4^{c(H)}$ standard monomials of $M_n^{(1)}$.  Note that each spanning tree of $G$ would be assigned $4^0 = 1$ standard monomials, so presumably such a bijection would extend the correspondence between usual $G$-parking functions and spanning trees.  In Example \ref{ex:K4oneskel} we have the 16 parking functions coming from the spanning trees of $G$, and 4 new standard monomials coming from the $TU$-subgraph consisting of the edges $\{12,13,23\}$.

For graphs that lack symmetry we note that $\det(\tilde Q)$  depends on the choice of sink vertex.  This is in contrast to the usual Laplacian, where the determinant simply counts the number of spanning trees containing the sink and in particular is independent of which row/column is deleted to obtain $\tilde {\mathcal L}$.   In fact Corollary \ref{cor:standardoneskel} does not hold for general $G$, as the following example illustrates. 

\begin{example}\label{ex:K5minusedge}
Let $H$ be the graph obtained from removing the edge $(34)$ from the graph $K_5$.  The reduced signless Laplacian is 
\[\tilde {\mathcal Q}_H = \begin{bmatrix}
4 & 1 & 1 & 1 \\
1 & 4 & 1 & 1 \\
1 & 1 & 3 & 0\\
1 & 1 & 0 & 3
\end{bmatrix}
\]
\noindent
with $\det(\tilde {\mathcal Q}_H) = 99$.  According to Macaulay2 \cite{M2} we have $\dim_{\mathbb K} M_G^{(1)} = 105$, so that there are $105$ standard monomials of $M_G^{(1)}$ in this case. Note however if $H^\prime$ is the graph obtained from $K_5$ by removing the edge $01$ then we get $\det(\tilde {\mathcal Q}_{H^\prime}) = 135$, while there are 135 standard monomials of $M_H^{(1)}$.
\end{example}

\begin{figure} \label{fig:K5minusedge}
\includegraphics[scale = .4]{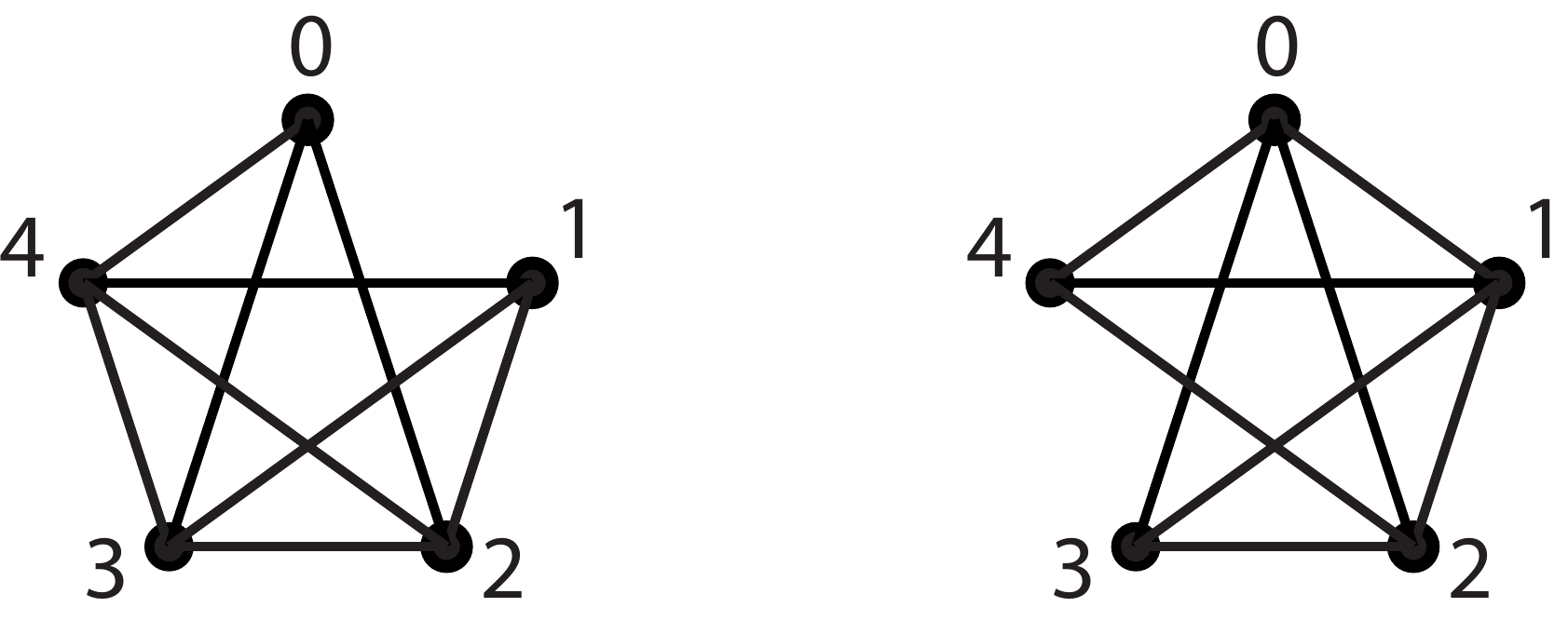}
\caption{The graphs $H^\prime$ and $H$ from Example \ref{ex:K5minusedge}.  The graphs are isomorphic but have different values of $\det \tilde {\mathcal Q}$ (in both cases the sink is given by the vertex 0).  }  
\end{figure}

After a number of calculations we have not found an example where the determinant of $\tilde {\mathcal Q}$ is \emph{larger} than the dimension of $S/M_G^{(1)}$, which begs the following question.

\begin{question} \label{ques:inequality}
For any graph $G$ is it true that
\[\dim_{\mathbb K} S/M_G^{(1)} \geq \det(\tilde {\mathcal Q}_G)?\]
\end{question}
\noindent
The hope here is that for each $TU$ subgraph $H \subset G$ we can find a way to assign $4^{c(H)}$ standard monomials of $M_G^{(1)}$ (in an injective fashion).  

\begin{rem}\label{rem:gen}
After this preprint was posted on the ArXiv we discovered that the standard monomials of $M_n^{(k)}$ can be seen to correspond to a certain class of \emph{vector parking functions}, as introduced by Pitman and Stanley in \cite{PitSta} and further studied by Yan in \cite{YanGen}.  To recall this notion suppose ${\bf u} = (u_1, u_2 , \dots u_n) \in {\mathbb N}^n$ is a vector of nonnegative integers.  A  sequence $(a_1, a_2, \dots, a_n)$ of nonnegative integers is a \defi{${\bf u}$-parking function} if its rearrangement $c_1 \leq c_2 \leq \cdots \leq c_n$ satisfies 
\[c_j < \sum_{i=1}^j u_i\]
for all $1 \leq j \leq n$.

Let $PF({\bf u})$ denote the set of all ${\bf u}$-parking functions. Observe that the usual parking functions are recovered for the case ${\bf u} = (1,1, \dots, 1)$.  One can check that the standard monomials of $M_n^{(k)}$ are naturally in one-to-one correspondence with the set of ${\bf u}_{n,k}$-parking functions, where
\[{\bf u}_{n,k} = (n-k, \underbrace{0, 0 \dots, 0}_\text{$n-k-1$ times}, \underbrace{1,1, \dots, 1}_\text{$k$ times}).\]
\noindent
By results of Yan \cite{Yan} the number of such monomials  is given by
\begin{equation}\label{eq:Yan}
\sum_{j=0}^k {n \choose j} (k + 1 - j)(k+1)^{j-1}(n-k)^{n-j}
\end{equation}
\noindent
For the case $k = 1$ one can check that Equation \ref{eq:Yan} indeed reduces to $(2n-1)(n-1)^{n-1}$, as established in Theorem \ref{thm:standardoneskel}. We refer to \cite{DocPar} for more discussion regarding the case $k = n-2$.
\end{rem}

For an arbitrary graph $G$ (with specified root vertex), the standard monomials of the $k$-skeleton ideal $M_G^{(k)}$ provide a natural blending of the ${\bf u}$-parking functions with the $G$-parking functions from \cite{PosSha}.  One could hope for a generalization Equation \ref{eq:Yan} that counts standard monomials of $M_G^{(k)}$, incorporating data from the underlying graph $G$.  As far as we know these notions have not been explored.


\section{Betti numbers and resolutions}\label{sec:res}

Recall that a free resolution of the $G$-parking function ideal $M_G$ is supported on ${\mathcal B}(\Delta_n)$, the barycentric subdivision of the $(n-1)$-dimensional simplex \cite{PosSha}.  This resolution is minimal only in the case that $G = K_{n+1}$ is the complete graph, in which case the ideal is generic and the resolution can be seen to coincide with the Scarf complex.

For the case of arbitrary $G$ the resolution supported on ${\mathcal B}(\Delta_n)$ is non minimal already in the first homological degree (corresponding to the first Betti number), since if $G$ is missing the edge $ij$ we have that the generator $m_{\{i,j\}}$ is redundant.  Hence the vertices of ${\mathcal B}(\Delta_n)$ do not constitute a set of minimal generators of the ideal $M_G$.   In \cite{DocSan} it was shown that a \emph{minimal} free resolution of $M_G$ is supported on the bounded complex of a certain affine slice of the \emph{graphical hyperplane arrangement} of $G$.  This provides a convenient description of the resolution itself and also leads to a \emph{combinatorial} interpretation of the Betti numbers of $M_G$ in terms of Whitney numbers of the intersection lattice of the graphical arrangement, acyclic orientations of (contractions of) the graph $G$ with unique sink, etc.  We refer to \cite{DocSan} for details.

In the case of the skeleton ideals $M_G^{(k)}$ one can ask whether the Betti numbers have similar combinatorial meaning, and if a (co)cellular resolution can be easily described.  Although in general this question does not seem to have a concise answer, we do have a way to interpret the Betti numbers for a large class of 1-skeleton ideals $M_G^{(1)}$.  

We first point that for any graph $G$ the first Betti number $\beta_1 = \beta_1(M_G^{(1)}$ has a straightforward description. If $G$ has vertex set $V = \{0,1, \dots, n\}$ (with sink $0$) and edge set $E$, we have
\begin{equation}\label{eq:gens}
\beta_1 = n + |E| - \deg(0),
\end{equation}
\noindent
where $\deg(0)$ refers to the degree of the root vertex $0$. To see this note that every vertex $i = 1, 2, \dots, n$ contributes the generator $x_i^{\deg(i)}$.  If $ij \in E$ is an edge in $G$ with $\{i,j\} \subset [n]$ then the subset $\sigma = \{i,j\}$ contributes the generator $m_\sigma = x_i^{\deg(i) - 1}x_j^{\deg(j) - 1}$, which is not generated by the singletons.  On the other hand if $ij \notin E$ we have $m_\sigma = x_i^{\deg(i)}x_j^{\deg(j)}$ which is not needed as generator.  Since by definition $M_G^{(1)}$ is generated by all monomials $m_\sigma$ with $\sigma \subset [n]$ satisfying $1 \leq |\sigma| \leq 2$ the claim follows.

We next turn to the higher syzygies and in particular cellular/cocellular realizations. Any complex supporting a cocellular resolution of $M_G^{(1)}$ must have maximal cells that index nonempty subsets of $[n]$ of cardinality at most 2 (just as the vertices of ${\mathcal B}(\Delta_n)$ are naturally labeled by \emph{all} nonempty subsets of $[n]$).    Indeed such a complex exists, and for its description it will be convenient to use the language of \emph{tropical convexity}, as developed by Develin and Sturmfels in \cite{DevStu} and adapted here for our purposes. Most of our techniques will be standard manipulations in tropical convexity but for the reader unfamiliar with such constructions we will provide a more or less self-contained treatment in what follows.

Let $\Delta_n$ denote the $(n-1)$-dimensional simplex given by the convex hull of the origin and the standard basis vectors
\[\Delta_n = \conv\{{\bf 0}, {\bf e}_1, \dots, {\bf e}_n\}.\]
Now suppose ${\bf a} = (a_1, a_2, \dots, a_n)$ is a point in ${\mathbb R}^n$, thought of as a linear functional in an appropriate dual space.  For us the \defi{tropical hyperplane}  $H({\bf a})$ determined by the functional ${\bf a}$ is by definition the codimension one skeleton of the outward normal fan of $\Delta_n$, with vertex at ${\bf a}$.  Note that the complement of the tropical hyperplane consists of $n+1$ maximal cones which we label $\{1,2, \dots, n+1\}$ according to which vertex of $\Delta_n$ it attains a maximum value on (with the origin treated as the $(n+1)$st vertex). See Figure \ref{fig:planes}.

\begin{figure}[h]
\includegraphics[scale = .95]{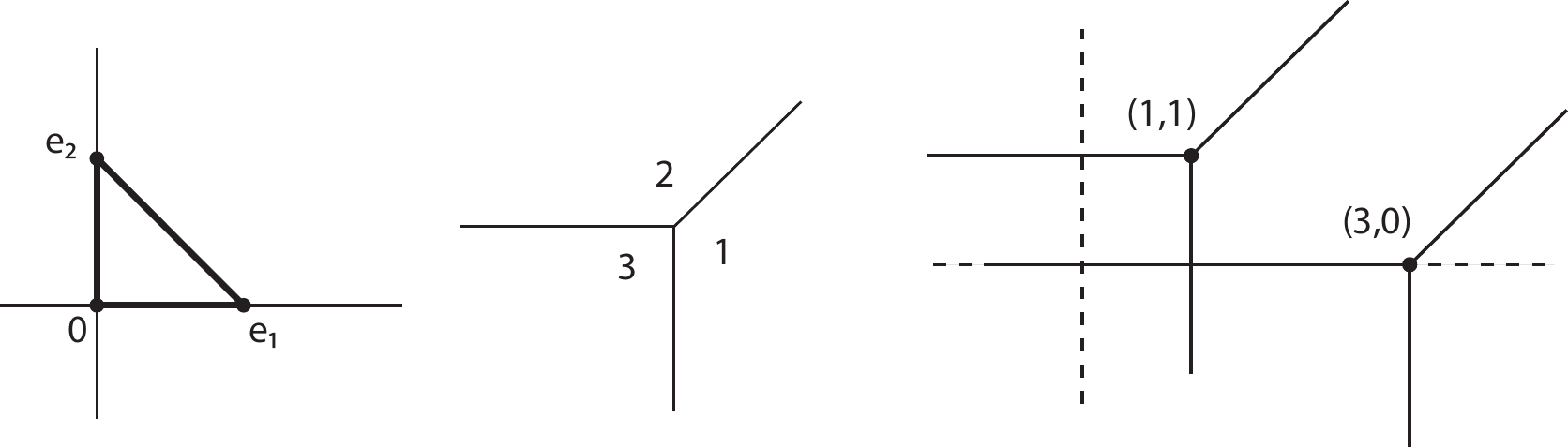}
\caption{The simplex $\Delta_2$, its normal fan with labeled cones, and the arrangement ${\mathcal A} = \{H(1,1), H(3,0)\}$.}
\label{fig:planes}
\end{figure}

A collection ${\mathcal A}$ of tropical hyperplanes then gives rise to a polyhedral decomposition $X_{\mathcal A}$ of ${\mathbb R}^n$, whose relatively open cells are determined by which cones of the hyperplanes they lie in.   We take the following notion from \cite{DevStu}, amended for our purposes.

\begin{defn}
Suppose $(T({\bf a}), T({\bf b}) )$ is an (ordered) arrangement of tropical hyperplanes in ${\mathbb R}^n$, and let ${\bf x} \in {\mathbb R}^n$.  Let ${\bf a}^* = (a_1,  \dots, a_n, a_{n+1}= 0)$, ${\bf b}^* = (b_1, \dots, b_n, b_{n+1}=0)$, and ${\bf x}^* = (x_1, \dots, x_n, x_{n+1}= 0)$ denote the vectors in ${\mathbb R}^{n+1}$ obtained by adding a `0' in the last coordinate.  Then the \defi{type} of ${\bf x}$ is given by $T({\bf x}) = (T_a({\bf x}), T_b({\bf x}))$, where
\[T_a({\bf x}) = \{i \in [n+1]: x_i - a_i = \max_{j \in [n+1]} \{x_j - a_j\}\},\]
\[T_b({\bf x}) = \{i \in [n+1]: x_i - b_i = \max_{j \in [n+1]} \{x_j - b_j\}\}.\]
\end{defn}
\noindent
Alternatively, if we think of each tropical hyperplane in terms of the labeled cones that they define, then $T_a({\bf x})$ (resp. $T_b({\bf x})$)  is the set of closed cones of $H({\bf a})$ (resp. $H({\bf b})$) that intersect the point ${\bf x}$.  We will often use this description in our arguments.

For example in the arrangement depicted in Figure \ref{fig:planes}, we have $T_{\mathcal A}(2,0) = (\{1\}, \{2,3\})$. Note that this notion corresponds to the `fine' types of \cite{DocJosSan}, but we won't need the distinction here.  In \cite{DevStu} it is shown that the set of ${\bf x} \in {\mathbb R}^n$ with a fixed type forms a relatively open convex (and tropically convex) subset of ${\mathbb R}^n$, the cells of the decomposition ${\mathcal X}_{\mathcal A}$.  If $F \in {\mathcal X}_{\mathcal A}$ is a relatively open cell we will often speak of the type of $F$, by which we mean the type of any point ${\bf x} \in F$.

Given a pair of tropical hyperplanes ${\mathcal A} = (H({\bf a}), H({\bf b}))$ we label each maximal cell $F$ of a $X_{\mathcal A}$ with a monomial $m_F$ as follows.   If ${\bf x} \in F$ is any point in the relative interior of $F$ with type $T_{\mathcal A}({\bf x}) = (T_a,T_b)$ we let $\sigma = T_a \cup T_b$ and set $m_F = m_\sigma$.  This is well-defined since, as we have seen, every point in the relative interior of $F$ has the same type.  For example in Figure \ref{fig:planes} the point ${\bf x} = (2,-1)$ has type $(\{1\},\{3\})$ and hence the cell containing ${\bf x}$ is labeled by the monomial
\[m_{\{1,3\}} =   x_1^{\outdeg(1)}x_3^{\outdeg(3)}. \]
If $G \in X_{\mathcal A}$ is any other (non maximal) face, we label it with the monomial $m_G$ where 
\[m_G = \lcm\{m_F: \text{$F$ is a facet with $G \subseteq F$}\}.\]
 \noindent
 We refer to Figure \ref{fig:labelcomplex} for an example.

\begin{figure}[h]
\includegraphics[scale = 1.05]{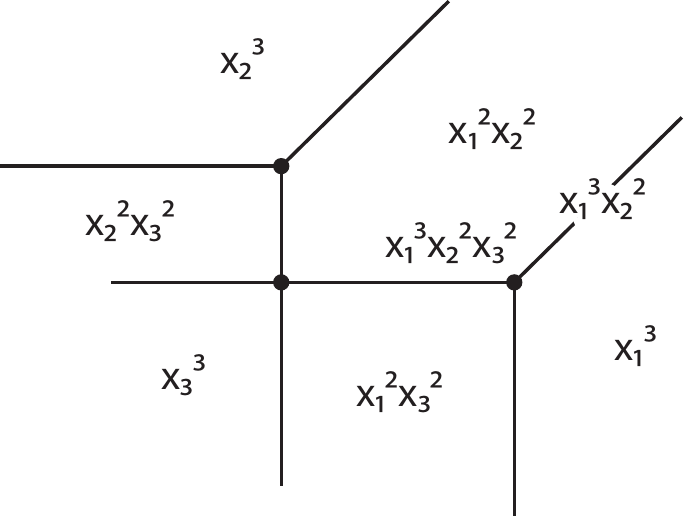} 
\caption{The monomial labeled complex $X_{\mathcal A}$ (with only certain labels depicted).}
 \label{fig:labelcomplex}
\end{figure}

We will see that with this monomial labeling the complex $X_{\mathcal A}$ in fact supports a cocellular resolution of our ideals.  To establish this we will need one more notion from tropical convexity.

\begin{defn}
Given points ${\bf x}, {\bf y} \in {\mathbb R}^n$ the \defi{(max-)tropical line segment} they determine is given by the set of points ${\bf p} \in {\mathbb R}^n$ of the form
\[{\bf p} = \max\{\lambda {\bf 1} + {\bf x}, \mu {\bf 1} + {\bf y}\},\]
\noindent
where $\lambda, \mu$ are arbitary real numbers, and ${\bf 1}$ is the vector in ${\mathbb R}^n$ consisting of all one's.
\end{defn}

We then have the following results from \cite{DocJosSan}, again adapted to suit our setup.

\begin{prop}\cite{DocJosSan} \label{prop:types}
Let ${\mathcal A} = \{H({\bf a}), H({\bf b})\}$ be an arrangement of tropical hyperplanes in ${\mathbb R}^n$, and let ${\mathcal X}_{\mathcal A}$ denote the induced decomposition of ${\mathbb R}^n$. For every cell $G \in X_{\mathcal A}$ of codimension $\geq 1$ we have
\[T_{\mathcal A}(G) = \bigcup_{G \subset F} T_{\mathcal A} (F),\]
\noindent
where the union of a pair of sets is taken to be the pair of the unions, so that e.g. $(S_a, S_b) \cup (T_a, T_b) = (S_a \cup T_a, S_b \cup T_b)$.
\end{prop}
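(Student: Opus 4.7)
The plan is to prove the two containments separately. The easy direction $\bigcup_{G \subset F} T_\mathcal{A}(F) \subseteq T_\mathcal{A}(G)$ follows from a semicontinuity argument: for any cell $F$ whose closure contains $G$, the conditions $y_i - a_i = \max_\ell(y_\ell - a_\ell)$ and $y_j - b_j = \max_\ell(y_\ell - b_\ell)$ are closed conditions on $\mathbf{y}$, so if they hold throughout $F$ they persist at any limit point $\mathbf{x} \in G$. Hence $T_a(F) \subseteq T_a(G)$ and $T_b(F) \subseteq T_b(G)$, and taking the coordinatewise union over all such $F$ gives the inclusion.

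For the reverse direction $T_\mathcal{A}(G) \subseteq \bigcup_{G \subset F} T_\mathcal{A}(F)$, I would pick a point $\mathbf{x}$ in the relative interior of $G$, so that $T_\mathcal{A}(\mathbf{x}) = T_\mathcal{A}(G) = (S_a, S_b)$. Given an arbitrary $i_0 \in S_a$, the task is to produce a maximal cell $F$ with $G \subset \bar F$ and $i_0 \in T_a(F)$; the symmetric case $i_0 \in S_b$ is handled identically. The approach is to obtain $F$ by perturbation: choose a direction $\mathbf{v} \in \mathbb{R}^n$ so that for small $\epsilon > 0$ the point $\mathbf{y} = \mathbf{x} + \epsilon \mathbf{v}$ has type $(\{i_0\}, \{j_0\})$ for a chosen $j_0 \in S_b$. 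Since singleton types correspond to full-dimensional cells, $F$ is automatically maximal, and $\mathbf{x} \in \bar F$ forces $G \subset \bar F$ by the face structure of the polyhedral decomposition.

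The main obstacle is constructing $\mathbf{v}$ satisfying the required strict inequalities under the rigidity $v_{n+1} = 0$ (since the extended coordinate is pinned to zero by definition). For $\ell \notin S_a$ the strict inequality $x_\ell - a_\ell < x_{i_0} - a_{i_0}$ is preserved under any small perturbation, so the only active constraints are $v_{i_0} > v_\ell$ for $\ell \in S_a \setminus \{i_0\}$ and $v_{j_0} > v_\ell$ for $\ell \in S_b \setminus \{j_0\}$. These can be satisfied in every case by a short check: if $i_0 \in S_b$, set $j_0 = i_0$, so that a single direction in which $v_{i_0}$ dominates all other entries of $S_a \cup S_b$ suffices; otherwise pick any $j_0 \in S_b$ and assign $v_{i_0} > v_{j_0}$ both larger than the remaining coordinates, employing negative values whenever $n+1 \in S_a$ or $n+1 \in S_b$ forces the fixed entry $v_{n+1} = 0$ to sit strictly below the chosen maxima. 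Once such a $\mathbf{v}$ is exhibited, the perturbed point $\mathbf{y}$ lies in the desired maximal cell $F$ and the proof is complete.
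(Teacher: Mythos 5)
The paper states Proposition~\ref{prop:types} as a citation to \cite{DocJosSan} and gives no proof of its own, so there is no paper-internal argument to compare against. What can be assessed is whether your proof is correct, and it is.

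Your ``easy'' direction is sound: for each $i \in [n+1]$ the locus $\{\mathbf{y} : y_i - a_i \geq y_\ell - a_\ell \ \forall \ell\}$ is closed, so membership of $i$ in $T_a(\cdot)$ passes from a cell to its boundary faces, giving $T_{\mathcal A}(F) \subseteq T_{\mathcal A}(G)$ coordinatewise whenever $G \subseteq \overline{F}$. For the reverse inclusion your perturbation argument is the right idea, and your identification of the active constraints is correct: starting from a relative-interior point $\mathbf{x}$ of $G$ with type $(S_a,S_b)$, the only inequalities that need to be engineered are $v_{i_0} > v_\ell$ for $\ell \in S_a \setminus \{i_0\}$ and $v_{j_0} > v_\ell$ for $\ell \in S_b \setminus \{j_0\}$, subject to $v_{n+1} = 0$; the other (strict) inequalities survive a sufficiently small perturbation. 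Your case analysis around the pinned coordinate $v_{n+1}=0$ covers the possibilities, and the target type being a pair of singletons forces the perturbed point into a maximal cell. One point you should make explicit, rather than implicit, is that you need the entire initial segment $\{\mathbf{x} + \epsilon \mathbf{v} : 0 < \epsilon < \epsilon_0\}$ to lie in a single maximal cell $F$ in order to conclude $\mathbf{x} \in \overline{F}$ and hence $G \subseteq \overline{F}$; this does hold since the type is constant on that segment for $\epsilon_0$ small enough, but the phrase ``for small $\epsilon > 0$'' should be read as ``for all sufficiently small $\epsilon$,'' not a single value. As a stylistic remark, the source \cite{DocJosSan} and its antecedent \cite{DevStu} tend to derive such type identities from the duality with regular subdivisions of $\Delta_1 \times \Delta_{n-1}$; your proof is more elementary and self-contained, which is a reasonable trade-off here.
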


\begin{lemma}\cite{DocJosSan} 
Suppose ${\bf x}, {\bf y} \in {\mathbb R}^n$, and let ${\bf z}$ be a point on the tropical line segment between ${\bf x}$ and ${\bf y}$.  Suppose ${\mathcal A} = \{H({\bf a}), H({\bf b})\}$ is an arrangement of a pair of tropical hyperplanes in ${\mathbb R}^n$, and let $T_{\mathcal A}({\bf x}) = (C_a,C_b)$, $T_{\mathcal A}({\bf y}) = (D_a,D_b)$, and $T_{\mathcal A}({\bf z}) = (E_a,E_b)$ denote the types of these points with respect to ${\mathcal A}$. Then we have the following inclusions:
\begin{equation}\label{typecontain}
\begin{aligned}
&C_a \cap D_a \subseteq E_a \subseteq C_a \cup D_a, \\
&C_b \cap D_b \subseteq E_b \subseteq C_b \cup D_b.
\end{aligned}
\end{equation}
\label{lem:types}
\end{lemma}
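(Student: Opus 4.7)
The plan is to verify the containments by a direct computation, once we handle the bookkeeping caused by the ``augmentation convention'' that pads vectors in $\mathbb{R}^n$ with a trailing $0$ before computing types.

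The first step is to reduce to a cleaner formula for the type of $\mathbf{z}$. Writing $\mathbf{z}^* = (z_1,\dots,z_n,0)$ with $z_i = \max\{\lambda+x_i,\mu+y_i\}$, the naive ``componentwise'' vector $\tilde{\mathbf{z}} := \max\{\lambda\mathbf{1}+\mathbf{x}^*,\mu\mathbf{1}+\mathbf{y}^*\}$ in $\mathbb{R}^{n+1}$ agrees with $\mathbf{z}^*$ in the first $n$ coordinates but has $(n+1)$-st coordinate $\max\{\lambda,\mu\}$, so $\tilde{\mathbf{z}} = \mathbf{z}^* + \max\{\lambda,\mu\}\cdot\mathbf{1}$. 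Since the type function $T_{\mathcal{A}}$ is clearly invariant under translation by multiples of $\mathbf{1}$ (it is defined by the argmax of a difference), we have $T_{\mathcal{A}}(\mathbf{z}) = T_{\mathcal{A}}(\tilde{\mathbf{z}})$. Hence, for the purpose of computing types, I may replace $\mathbf{z}^*$ with $\tilde{\mathbf{z}}$ and work entirely in $\mathbb{R}^{n+1}$ with the tropical max applied coordinatewise.

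The second step is to prove the two containments $C_a \cap D_a \subseteq E_a \subseteq C_a \cup D_a$; the argument for $b$ is identical and so needs no separate treatment. For the left containment, suppose $i \in C_a \cap D_a$. Then $\lambda + x^*_i - a^*_i \geq \lambda + x^*_j - a^*_j$ and $\mu + y^*_i - a^*_i \geq \mu + y^*_j - a^*_j$ for every $j \in [n+1]$. Taking the max of each pair of left-hand sides against the max of the corresponding right-hand sides, and using the elementary fact that $A \geq C$ and $B \geq D$ imply $\max(A,B) \geq \max(C,D)$, gives $\tilde{z}_i - a^*_i \geq \tilde{z}_j - a^*_j$ for all $j$, so $i \in E_a$. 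For the right containment, suppose $i \in E_a$, so $\tilde{z}_i - a^*_i$ is the global maximum. By definition this equals $\max\{\lambda + x^*_i - a^*_i,\ \mu + y^*_i - a^*_i\}$; without loss of generality the max is attained by the first argument, i.e., $\tilde{z}_i - a^*_i = \lambda + x^*_i - a^*_i$. For any $j$, one has $\tilde{z}_j - a^*_j \geq \lambda + x^*_j - a^*_j$ trivially, and combining gives $x^*_i - a^*_i \geq x^*_j - a^*_j$, so $i \in C_a$. The symmetric case yields $i \in D_a$, and therefore $i \in C_a \cup D_a$.

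I do not anticipate a substantive obstacle: the content of the lemma is just that coordinatewise tropical maxes interact with argmax sets in a monotone way, and the only real subtlety is ensuring that the augmentation by $0$ in the $(n+1)$-st slot does not spoil the componentwise formula. That is handled once and for all by the translation-invariance reduction in the first step.
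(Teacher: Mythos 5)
There is a genuine algebraic error in your first step, and it sits exactly at the subtlety you flagged. You compute correctly that $\tilde{\mathbf z}:=\max\{\lambda\mathbf 1+\mathbf x^*,\mu\mathbf 1+\mathbf y^*\}$ agrees with $\mathbf z^*$ in the first $n$ coordinates and has last coordinate $\max\{\lambda,\mu\}$, but you then write $\tilde{\mathbf z}=\mathbf z^*+\max\{\lambda,\mu\}\cdot\mathbf 1$. That identity is false: the difference $\tilde{\mathbf z}-\mathbf z^*$ is $\max\{\lambda,\mu\}\cdot\mathbf e_{n+1}$, a vector supported only in the $(n+1)$-st slot, not a multiple of the all-ones vector. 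Consequently $T_{\mathcal A}(\mathbf z)\ne T_{\mathcal A}(\tilde{\mathbf z})$ in general, and the translation-invariance reduction does not go through. In fact, with the definition of tropical segment literally as printed in the paper the lemma itself fails: take $n=2$, $\mathbf a=(0,0)$, $\mathbf x=(1,-5)$, $\mathbf y=(-5,1)$, $\lambda=\mu=-10$; then $\mathbf z=(-9,-9)$, $T_a(\mathbf z)=\{3\}$, but $C_a\cup D_a=\{1,2\}$. The issue is that the definition of the tropical segment in this paper omits the standard normalization: one should take $p_i=\max\{\lambda+x_i,\mu+y_i\}-\max\{\lambda,\mu\}$ (equivalently, impose $\max\{\lambda,\mu\}=0$), which is the image in $\mathbb R^n$ of the Develin--Sturmfels segment in $\mathbb{TP}^n$. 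With that corrected definition, $\mathbf z^*$ and $\tilde{\mathbf z}$ do differ by $\max\{\lambda,\mu\}\cdot\mathbf 1_{n+1}$, your step 1 becomes valid, and your step 2 argument (which is correct and is the real content of the lemma, amounting to the monotonicity of $\arg\max$ under coordinatewise $\max$ after a common shift) completes the proof. So the fix is: first renormalize the segment, then carry out exactly the two containments as you wrote them.
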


\begin{prop}\cite{DocJosSan} \label{prop:distinct}
Let $C$ and $D$ be distinct cells in ${\mathcal X}_{\mathcal A}$ with types $T(C)_{\mathcal A} = (C_a, C_b)$ and $T(D)_{\mathcal A} = (D_a, D_b)$.  If $C$ is contained in the closure of $D$ then as multisets we have
\[C_a \sqcup C_b \neq D_a \sqcup D_b.\]
\end{prop}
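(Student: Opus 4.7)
The plan is to show that $|C_a \sqcup C_b| > |D_a \sqcup D_b|$ whenever $C$ and $D$ are distinct cells of $\mathcal{X}_{\mathcal{A}}$ with $C \subset \overline{D}$; this strict inequality of cardinalities immediately precludes equality of the two disjoint unions as multisets. The two ingredients I would use are (i) a componentwise inclusion of types when passing to the closure, and (ii) the fact, established in \cite{DevStu}, that distinct cells of $\mathcal{X}_{\mathcal{A}}$ carry distinct types.

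For step (i), I would show that $C \subset \overline{D}$ forces $D_a \subseteq C_a$ and $D_b \subseteq C_b$. The cleanest route is through Proposition~\ref{prop:types}: each of $T_{\mathcal{A}}(C)$ and $T_{\mathcal{A}}(D)$ is the componentwise union of facet-types taken over all facets whose closure contains the cell in question, and the hypothesis $C \subset \overline{D}$ ensures that every facet whose closure contains $D$ also contains $C$. Hence the indexing set for the union producing $T_{\mathcal{A}}(D)$ is a subset of the one producing $T_{\mathcal{A}}(C)$, which yields the desired inclusions. An alternative and more direct argument uses continuity of the functionals $\max_j (y_j - a_j)$ and $\max_j (y_j - b_j)$: any point $x \in C$ is a limit of points $y \in D$, so for each $i \in D_a$ the equality $y_i - a_i = \max_j (y_j - a_j)$ passes to the limit and places $i$ in $C_a$, and similarly for the $b$-components.

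For step (ii), the characterization of cells as fibers of the type map together with $C \neq D$ forces $(C_a, C_b) \neq (D_a, D_b)$. Combined with the inclusions from step (i), at least one of $D_a \subsetneq C_a$ or $D_b \subsetneq C_b$ must be strict, so $|C_a| + |C_b| > |D_a| + |D_b|$, which is exactly $|C_a \sqcup C_b| > |D_a \sqcup D_b|$. I do not anticipate a serious obstacle: once the type inclusions and the uniqueness of the type-to-cell correspondence are in hand, the proof reduces to a one-line cardinality count. The only care required is to track the correct direction of the inclusion (passing to closures \emph{enlarges} types) and to invoke uniqueness of types in order to promote the weak inclusions to a strict one.
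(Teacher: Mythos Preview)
Your argument is correct and is essentially the standard one: passing to the closure enlarges each type component, distinct cells have distinct types (since the cells of $\mathcal{X}_{\mathcal{A}}$ are precisely the fibers of the type map), and hence at least one inclusion is strict, forcing $|C_a|+|C_b|>|D_a|+|D_b|$. Note, however, that the present paper does not give its own proof of this proposition; it is quoted from \cite{DocJosSan} (as indicated by the citation in the statement), so there is no in-paper proof to compare against. Your write-up matches the approach in that reference.
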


Note that cells that are `far away' in the complex ${\mathcal X}_{\mathcal A}$ can violate the inequality in the previous Proposition. For instance in the arrangement in Figure (\ref{fig:planes}) we have $T(1,1/2) = (\{1,3\},\{2\})$ and $T(2,0) = (\{1\}, \{2,3\})$. In this case both of the relevant faces have the monomial label $x_1^2x_2^2x_3^2$.  With these in statements in place we can now prove our main results of this section.

\begin{thm} \label{thm:resoneskel}
Suppose ${\mathcal A} = \{H({\bf a}), H({\bf b}) \}$ is an arrangement of two tropical hyperplanes in ${\mathbb R}^{n-1}$ in generic position, and let $X_{\mathcal A}$ denote the induced subdivision of ${\mathbb R}^{n-1}$ with the monomial labeling described above. Then $X_{\mathcal A}$ supports a minimal cocellular resolution of the ideal $M_{K_{n+1}}^{(1)}$.
\end{thm}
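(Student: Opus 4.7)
The plan is to apply the cocellular resolution criterion of Lemma \ref{lem:cocell}, which requires three things: (a) that the maximal cells of $X_{\mathcal A}$ are labeled by the minimal generators of $M_{K_{n+1}}^{(1)}$, (b) that $m_F \neq m_G$ for every proper containment of cells $F \subsetneq G$, and (c) that the subcomplex of cells with labels dividing a fixed monomial $m$ is $\mathbb{K}$-acyclic for every $m$. For (a), a maximal cell has type $(\{i\}, \{j\})$ with $i, j \in [n]$ and carries the label $m_{\{i, j\}}$ (where $m_{\{i, i\}} = m_{\{i\}} = x_i^n$). Generic position of the pair of hyperplanes induces a linear order on $[n]$, and a short calculation of the sector inequalities shows that $(\{i\}, \{j\})$ is realized as the type of some maximal cell precisely when $i$ precedes or equals $j$ in this order. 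Counting then gives exactly $n + \binom{n}{2}$ maximal cells whose labels are the distinct minimal generators of $M_{K_{n+1}}^{(1)}$.

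Minimality, part (b), follows from Propositions \ref{prop:types} and \ref{prop:distinct}. By the colabeling, every cell $F$ satisfies $m_F = \lcm\{m_{\{i, j\}} : (i, j) \in T_a(F) \times T_b(F)\}$. If $F \subsetneq G$, Proposition \ref{prop:types} gives $T_a(F) \supseteq T_a(G)$ and $T_b(F) \supseteq T_b(G)$ with at least one containment strict, and Proposition \ref{prop:distinct} ensures the multisets $T_a(F) \sqcup T_b(F)$ and $T_a(G) \sqcup T_b(G)$ differ. Any pair newly appearing in $T_a(F) \times T_b(F)$ but absent from $T_a(G) \times T_b(G)$ contributes a strictly higher exponent in some variable to the lcm, so $m_F \neq m_G$.

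The main technical step is (c). Fix $m = \prod_i x_i^{c_i}$ and set $A = \{i : c_i \geq n\} \subseteq B = \{i : c_i \geq n - 1\}$. Directly from the lcm formula, $m_F \mid m$ if and only if $T_a(F) \cup T_b(F) \subseteq B$ and $T_a(F) \cap T_b(F) \subseteq A$. My plan is to show that the corresponding union of relatively open cells of $X_{\mathcal A}$ is either empty or contractible. Lemma \ref{lem:types} controls how types change along tropical line segments, while the linear order from genericity forces every realized cell type $(T_a, T_b)$ to satisfy $\max T_a \leq \min T_b$. Combined with $T_a \cup T_b \subseteq B$, the combinatorics reduces to a poset of pairs $(T_a, T_b)$ of subsets of $B$ with $\max T_a \leq \min T_b$ and common element (if any) in $A$. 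I would then construct an explicit combinatorial deformation retract of this subcomplex onto a single chosen cell --- for instance the maximal cell of type $(\{b_1\}, \{b_2\})$ with $b_1 < b_2$ the two smallest elements of $B$, or onto $(\{b_1\}, \{b_1\})$ when $|B| = 1$ with $b_1 \in A$.

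The principal obstacle is the acyclicity check. The subcomplex in (c) is \emph{not} tropically convex in general: a tropical segment between cells whose types have disjoint intersections can pass through a cell whose type has a nontrivial intersection not contained in $A$, as one sees in small explicit examples. What rescues contractibility is the linear order structure: it ensures that whenever a potential cycle forms among the maximal cells of the subcomplex, it is either broken by a boundary cell absent from the subcomplex or filled in by a higher-dimensional cell whose label also divides $m$. Turning this picture into a rigorous deformation retraction, or equivalently verifying that the subcomplex collapses to a single cell via an acyclic matching in the cell poset, is where the bulk of the work lies.
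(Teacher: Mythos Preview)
Your steps (a) and (b) are fine and essentially match the paper. Your step (c), however, rests on an incorrect assertion: the set $(X_{\mathcal A})_{\leq m}$ \emph{is} tropically convex, and this is precisely the route the paper takes (following \cite{DocJosSan}). The ``small explicit examples'' you allude to do not exist.

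The concern you raise is a natural one: Lemma~\ref{lem:types} alone only gives $T_a(\mathbf z)\subseteq T_a(\mathbf x)\cup T_a(\mathbf y)$ and $T_b(\mathbf z)\subseteq T_b(\mathbf x)\cup T_b(\mathbf y)$, and from these containments it is not immediately obvious that an index $i$ cannot land in $T_a(\mathbf z)\cap T_b(\mathbf z)$ while lying only in $T_a(\mathbf x)\setminus T_b(\mathbf x)$ and in $T_b(\mathbf y)\setminus T_a(\mathbf y)$. But a closer look at the tropical segment rules this out. Writing $\phi_c(\mathbf p)=\max_j(p^*_j-c^*_j)$ for $c\in\{a,b\}$ and $\psi=\phi_a-\phi_b$, one checks that along the (properly normalized) segment the type $T_c(\mathbf z)$ switches from $T_c(\mathbf y)$ to $T_c(\mathbf x)$ at the single threshold $\tau_c=\phi_c(\mathbf y)-\phi_c(\mathbf x)$, so the \emph{only} mixed type that can appear in the interior is $(T_a(\mathbf y),T_b(\mathbf x))$ when $\tau_a>\tau_b$, or $(T_a(\mathbf x),T_b(\mathbf y))$ when $\tau_a<\tau_b$. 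Now suppose some $i$ lies in $T_a(\mathbf y)\cap T_b(\mathbf x)$ but in neither $T_a(\mathbf x)$ nor $T_b(\mathbf y)$. The four membership conditions give
\[
\psi(\mathbf y)\;<\;b^*_i-a^*_i\;<\;\psi(\mathbf x),
\]
forcing $\tau_a-\tau_b=\psi(\mathbf y)-\psi(\mathbf x)<0$. But then the mixed type actually visited is $(T_a(\mathbf x),T_b(\mathbf y))$, not $(T_a(\mathbf y),T_b(\mathbf x))$, so the dangerous index $i$ never enters $T_a(\mathbf z)\cap T_b(\mathbf z)$. The symmetric case is identical. Hence every point on the segment has label dividing $m$, and $(X_{\mathcal A})_{\leq m}$ is tropically convex, hence contractible.

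If the counterexample you had in mind came from allowing arbitrary $\lambda,\mu$ in $\max(\lambda\mathbf 1+\mathbf x,\mu\mathbf 1+\mathbf y)$ without normalizing the extra coordinate to zero, note that this produces a two-parameter family in $\mathbb R^{n-1}$ that is not the tropical segment; the cells of $X_{\mathcal A}$ are not invariant under translation by $\mathbb R\mathbf 1$. Your proposed discrete-Morse/deformation-retract argument may be completable, but it is both harder and unnecessary once tropical convexity is established.
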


\begin{proof}
Recall that a non-redundant list of generators of the ideal $M_n^{(1)} = M_{K_{n+1}}^{(1)}$ is indexed by all subsets $\sigma$ of $[n] = \{1,2,\dots, n\}$ such that $1 \leq |\sigma| \leq 2$.   Also recall that each maximal cell $F$ of $X_{\mathcal A}$ is labeled by the monomial $m_\sigma$, where $\sigma = T_a \cup T_b$ is the union of the sets occurring as the type of any ${\bf x} \in F$.  Since ${\bf x}$ does not lie on $H({\bf a}) \cup H({\bf b})$, we have $\sigma = \{i,j\}$ with $1 \leq i \leq j \leq n$.  Since the arrangement is assumed to be generic we have that every such (possibly degenerate) pair is realized as the union of type sets for some point ${\bf x} \in {\mathbb R}^n$.    This is equivalent to the fact that in any regular fine mixed subdivision of $\Delta_{n-1} + \Delta_{n-1}$ every lattice point occurs as a 0-cell, and these lattice points are of the form ${\bf e}_i + {\bf e}_j$ for all $1 \leq i \leq j \leq n$.  Furthermore, the set of points that have a particular $\sigma$ label forms a maximal connected open cell in ${\mathbb R}^n$.  We conclude that the ideal generated by the monomials corresponding to maximal cells of $X_{\mathcal A}$ is indeed the ideal $M_n^{(1)}$.

We next apply Lemma \ref{lem:cocell} to show that the complex of $S$-modules induced by the cochain complex of $X_{\mathcal A}$ along with this monomial labeling is indeed ${\mathbb K}$-acyclic.  For this suppose $m \in {\mathbb Z}^n$ is any monomial (identified with its exponent vector) and consider the complex $(X_{\mathcal A})_{\leq m}$.  We follow the argument in \cite{DocJosSan} to show that $(X_{\mathcal A})_{\leq m}$ is tropically convex, which implies that it is in fact contractible \cite{DevStu} (and in particular ${\mathbb K}$-acyclic).

The key observation here is that the monomial label of any point ${\bf x}$ in ${\mathcal A}$ can be read off from its type data.   Namely, if $F$ is a cell of ${\mathcal A}$ (of any dimension), and ${\bf x} \in F$ has type $T_{\mathcal A}({\bf x}) = (T_a, T_b)$ then the monomial label of $F$ is given by
\begin{equation}\label{eq:monlabel}
m_F = \prod_{i=1}^n x_i^{d_i},
\end{equation}
where 
\[d_i  =
\begin{cases}
      n  & \text{if }i \in T_a \text{ and } i \in T_b, \\
      n-1 & \text{if } i \in T_a \backslash T_b \text{ or } i \in T_b \backslash T_a, \\
      0 & \text{if } i \notin T_a \cup T_b.
      \end{cases}
     \]
This follows from the way that we have defined the monomial labeling.  Namely, if $F \in X_{\mathcal A}$ is a maximal cell and ${\bf x} \in F$ then ${\bf x}$ does not lie on either hyperplane of the arrangement and hence is in exactly one cone defined by each.  In this case the monomial label of the relevant cell is $x_i^{n-1}x_j^{n-1}$ if the cones are distinct, or $x_i^{n}$ if the cones are the same.  If ${\bf x}$ lies in some other (non maximal) cell $F \in X_{\mathcal A}$ then Proposition \ref{prop:types} implies that the monomial label $x_F$ described in (\ref{eq:monlabel}) is equal to the $\lcm$ of the monomial labels of all faces that contain $F$.

Now suppose ${\bf x}, {\bf y} \in (X_{\mathcal A})_{\leq m}$ and suppose ${\bf z}$ in a point in the tropical convex hull of ${\bf x}$ and ${\bf y}$. From Lemma \ref{lem:types} we have that $T_{\mathcal A}({\bf z})$ is contained in $T_{\mathcal A}({\bf x}) \cup T_{\mathcal A}({\bf y})$.  Hence if ${\bf z}$ lies in the cell $G \in {\mathcal X}_{\mathcal A}$ we have from (\ref{eq:monlabel}) that $m_F$ divides $m$.  We conclude that ${\bf z} \in (X_{\mathcal A})_{\leq m}$ so that $(X_{\mathcal A})_{\leq m}$ is tropically convex and hence ${\mathbb K}$-acyclic.

We next turn to minimality of the resolution.  If $F \subset G$ are cells of ${\mathcal X}_{\mathcal A}$ with $F \neq G$ then by Proposition \ref{prop:distinct} along with the labeling formula (\ref{eq:monlabel}) we have that $m_F \neq m_G$.  By Lemma \ref{lem:cocell} this implies that the resolution is minimal.
\end{proof}

Recall that the $i$th \defi{total Betti number} of a module is the sum $\sum_\sigma \beta_{i,\sigma}$ of all Betti numbers in the $i$th homological degree.  According to Theorem \ref{thm:resoneskel} the total Betti number of a $M_n^{(1)}$ is given by the number of codimension $i$ faces in the decomposition of ${\mathbb R}^n$ determined by the tropical hyperplane arrangement. Importing results from \cite{DocJosSan} we get the following formula.

\begin{cor}\label{cor:totalBetti}
The total Betti numbers $\beta_i^n$ of the ideal $M_{n}^{(1)}$ are given by
\[\beta_i^n = \sum_{j = 1}^n \; j {j-1 \choose i-1}\]
\end{cor}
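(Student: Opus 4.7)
The plan is to use Theorem~\ref{thm:resoneskel} to recast the Betti number calculation as a face count in a polyhedral complex, and then to carry it out explicitly using the staircase triangulation of a product of simplices. By Theorem~\ref{thm:resoneskel}, $M_n^{(1)}$ admits a minimal cocellular resolution supported on $X_{\mathcal{A}}$, so $\beta_i^n$ counts the codimension-$(i-1)$ cells of $X_{\mathcal{A}}$ in $\R^{n-1}$ (the cells contributing to $F_i$ in the resolution). Since any two generic arrangements of two tropical hyperplanes have the same combinatorial type, I am free to work with a convenient representative.

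Next I would invoke the Develin--Sturmfels / Cayley-trick duality: cells of $X_{\mathcal{A}}$ are in bijection with the mixed cells of a fine mixed subdivision of $2\Delta_{n-1}$, which via the Cayley trick correspond in turn to simplices of a triangulation of $\Delta_1 \times \Delta_{n-1}$ whose vertex sets meet both factors (only such simplices survive the Cayley slice). Under this chain of bijections, a codimension-$k$ cell of $X_{\mathcal{A}}$ matches a $(k+1)$-dimensional mixed Cayley simplex, so $\beta_i^n$ equals the number of $i$-dimensional simplices of the triangulation with vertices in both layers. I specialize to the staircase triangulation whose maximal simplices are $\sigma_m = \{(0,1),\ldots,(0,m)\} \cup \{(1,m),\ldots,(1,n)\}$ for $m\in[n]$; a subset $V_0 \cup V_1$ with $V_s \subseteq \{s\}\times[n]$ nonempty spans an $i$-simplex of the triangulation precisely when $\max V_0 \leq \min V_1$. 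Parameterizing by $m = \max V_0$ and $a = |V_0|$ yields
\[ \beta_i^n \;=\; \sum_{m=1}^n \sum_{a=1}^i \binom{m-1}{a-1}\binom{n-m+1}{i+1-a}. \]

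The remainder is pure identity manipulation. Vandermonde's identity collapses the inner sum to $\binom{n}{i} - \binom{m-1}{i}$, where the subtracted term accounts for the excluded case $V_1 = \emptyset$; the hockey-stick identity then gives $\sum_{m=1}^n \binom{m-1}{i} = \binom{n}{i+1}$, so $\beta_i^n = n\binom{n}{i} - \binom{n}{i+1}$. A short algebraic manipulation (using $\binom{n+1}{i+1} = \binom{n}{i} + \binom{n}{i+1}$) rewrites this as $i\binom{n+1}{i+1}$, and finally the claimed closed form follows from the identity $j\binom{j-1}{i-1} = i\binom{j}{i}$ together with one more hockey-stick step: $\sum_{j=1}^n j\binom{j-1}{i-1} = i\sum_{j=1}^n \binom{j}{i} = i\binom{n+1}{i+1}$. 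The main care needed is bookkeeping through the successive dimension shifts (cell codimension in $X_{\mathcal{A}}$, cell dimension in the mixed subdivision, simplex dimension in the Cayley triangulation) and correctly enforcing the restriction to simplices with vertices in both layers.
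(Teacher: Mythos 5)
Your proof is correct and follows essentially the same approach as the paper. The paper's own proof is a one-line citation of the face count in \cite{DocJosSan}; your argument makes that cited machinery explicit (the Develin--Sturmfels duality, the Cayley trick, and the staircase triangulation, all of which the paper alludes to only in the remarks following the corollary) and then carries the binomial manipulations through to the stated closed form. The dimension bookkeeping (codimension-$(i-1)$ cells of $X_{\mathcal A}$ $\leftrightarrow$ $i$-dimensional Cayley simplices meeting both layers), the Vandermonde collapse to $n\binom{n}{i}-\binom{n}{i+1}$, the simplification to $i\binom{n+1}{i+1}$, and the final hockey-stick identity all check out, and the intermediate closed form $\beta_i^n = i\binom{n+1}{i+1}$ is a clean simplification the paper does not record.
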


Note that $\beta_1^n = \sum_{j=1}^n j = {n + 1 \choose 2}$, as predicted by Equation \ref{eq:gens}.

\begin{example}\label{ex:M3oneskel}
For $n=3$ the resolution of $M_3^{(1)}$ has a ${\mathbb Z}$-graded resolution given by 
\[0 \leftarrow M_{3}^{(2)} \leftarrow S(-3)^3 \oplus S(-4)^3 \leftarrow S(-5)^6 \oplus S(-6)^2 \leftarrow S(-7)^3 \leftarrow 0. \]
\noindent
The rank of each module, along with the structure of each differential map, can be read off from the monomial labeled complex depicted in Figure \ref{fig:labelcomplex}.  The total Betti numbers are given by $\beta_1^3 = 6$, $\beta_2^3 = 8$, $\beta_3^3 = 3$.
\end{example}

\begin{rem}
Note that since the complexes $(X_{\mathcal A})_{\leq m}$ considered above are \emph{contractible} (and hence ${\mathbb K}$-acylic for any field ${\mathbb K}$), it follows that the resolution and associated Betti numbers are independent of which field we are working over.  Similar properties hold for the resolutions of $M_G$ considered in \cite{PosSha} and \cite{DocSan}.
\end{rem}

The combinatorics of the decomposition of ${\mathbb R}^n$ induced by a generic arrangement of a pair of tropical hyperplanes is closely related to regular triangulations of a product of simplices $\Delta_1 \times \Delta_{n-1}$, as spelled out in \cite{DevStu}.   By the Cayley Trick \cite{DelRamSan} this implies that  a \emph{cellular} resolution of the ideal $M_{n}^{(1)}$ is supported on any `regular fine mixed subdivision' of the Minkowski sum $\Delta_{n-1} + \Delta_{n-1}$ (see Figure \ref{fig:mixedsub}).  We will not stress this perspective here and refer to \cite{DocJosSan} for more details, where arrangements of tropical hyperplanes are used to construct minimal cellular resolutions of similar monomial ideals arising from oriented matroid `type' data.

\begin{figure}[h] 
\includegraphics[scale = .45]{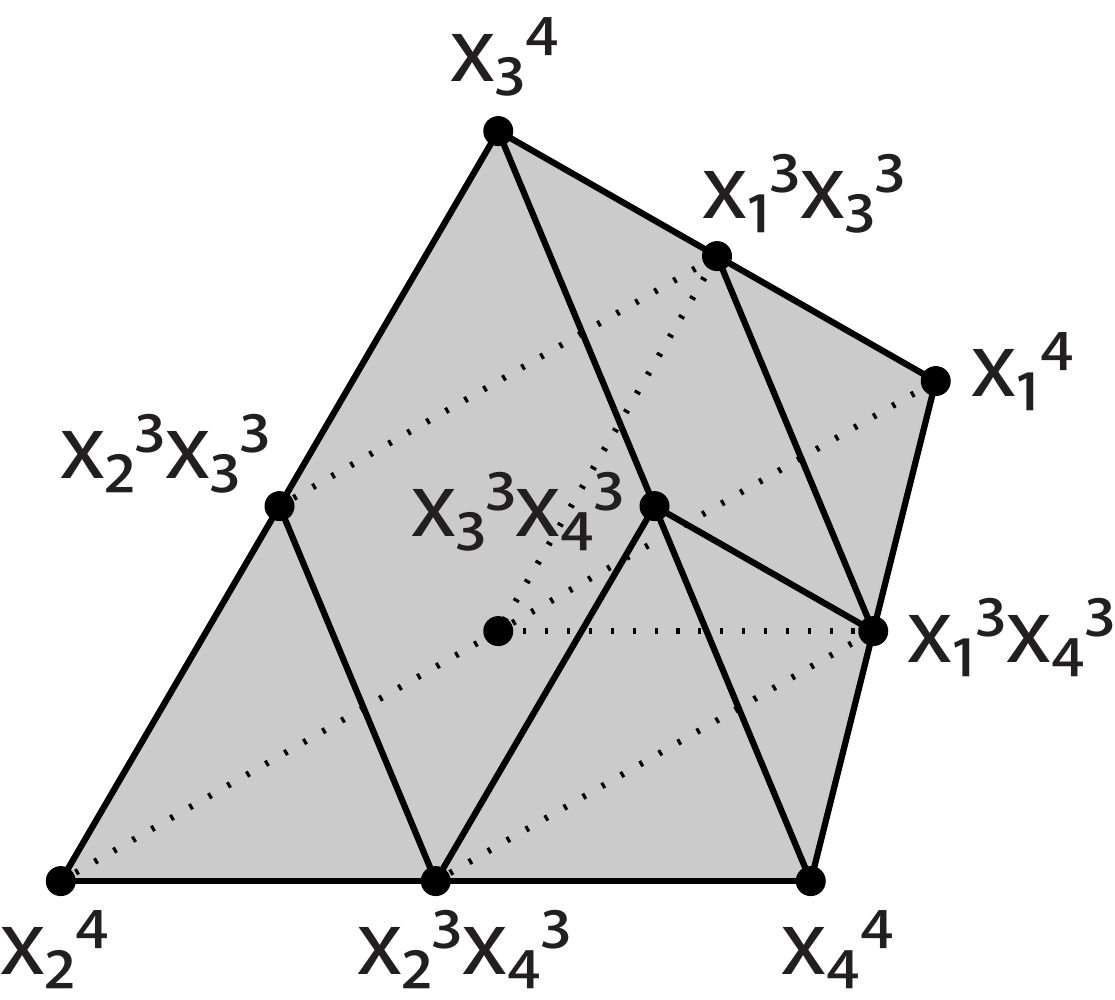}
\caption{A \emph{cellular} minimal resolution of $M_4^{(1)}$ is supported on any regular fine mixed subdivision of $\Delta_3 + \Delta_3$ (the vertex in the back is labeled $x_1^3x_2^3$). }
\label{fig:mixedsub}
\end{figure}

\begin{rem}
Triangulations of a product of simplices (and the related mixed subdivisions of dilated simplices) are widely studied objects (see for example Chapter 6.2 of \cite{DelRamSan}). Applying this technology we obtain several combinatorial interpretations for the Betti numbers of $M_n^{(1)}$.  In particular if we consider the `staircase' (or `pulling') triangulation of $\Delta_1 \times \Delta_{n-1}$ we see that maximal syzygies (corresponding to simplices in the triangulation) can be encoded by certain lattice paths or, equivalently, certain `non-crossing' spanning trees of the complete bipartite graph $K_{2,n}$ (see Figure \ref{fig:trees}).  We refer to \cite{DelRamSan} for more details.
\end{rem}

\begin{figure}[h] 
\includegraphics[scale = .45]{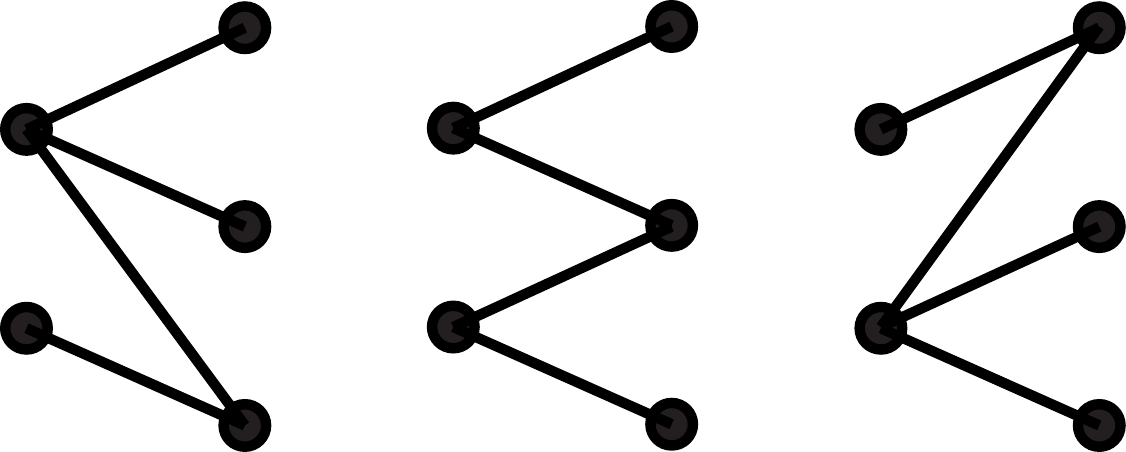}
\caption{Maximal syzygies of $M_n^{(1)}$ are indexed by `non-crossing' spanning trees of $K_{2,n}$ (here $n=3$, compare with Example \ref{ex:M3oneskel}). }
\label{fig:trees}
\end{figure}

We next consider resolutions of the ideal $M_G^{(1)}$ for an arbitrary graph $G$.  Recall that a (possibly redundant) set of generators of $M_G^{(1)}$ is again given by the set of monomials $m_\sigma$ for $1 \leq |\sigma| \leq 2$, and hence the decomposition of ${\mathbb R}^n$ induced by a generic arrangement of a pair of tropical hyperplanes has a natural monomial labeling with the property that its maximal cells correspond to these generators. However one can check that with such a labeling a generic arrangement of hyperplanes does not support a resolution of $M_G$ for general $G$.  For example if $G$ is the graph obtained from $K_4$ by removing the edge $12$, then the arrangement depicted in Figure \ref{fig:labelcomplex} (with relevant monomial labels) will have the property that the downset $({\mathcal X_{\mathcal A}})_{\leq (112)}$ is disconnected. For certain graphs, however, we can perturb our arrangements to obtain complexes that support (minimal) resolutions.

\begin{prop}\label{prop:resoneskeldef}
Suppose $G = H \ast \{0\}$ is obtained by coning the vertex $0$ over the graph $H$, where $H$ is obtained by removing a collection of disjoint cliques from the complete graph $K_n$.  Then there exists a (degenerate) arrangement ${\mathcal A} = (T({\bf a}), T({\bf b})) \subset {\mathbb R}^{n-1}$ of tropical hyperplanes with the property that the induced subdivision of ${\mathbb R}^{n-1}$ supports a minimal cocellular resolution $M_G^{(1)}$.
\end{prop}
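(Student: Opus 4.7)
The approach is to perturb the generic construction of Theorem~\ref{thm:resoneskel} by choosing the two apices in special position. Since $H$ is obtained by removing disjoint cliques from $K_n$, the complement structure partitions $[n]$ into blocks $P_1,\dots, P_k$ (the vertex sets of the removed cliques, with singletons for the remaining vertices); equivalently, $H$ is a complete multipartite graph. The construction is to choose apices $\mathbf a, \mathbf b \in \mathbb R^{n-1}$ satisfying
\[
b_i - a_i \;=\; b_j - a_j \quad \Longleftrightarrow \quad i \text{ and } j \text{ lie in the same block},
\]
with the common values $\{b_i - a_i\}$ across different blocks all distinct and in otherwise generic position. Each maximal cell of the resulting subdivision $X_\mathcal{A}$ is labeled by $m_{T_a \cup T_b} \in M_G^{(1)}$, using the $G$-specific exponents, and the labeling is extended to lower faces by $\lcm$ of facet labels.

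The first step is to identify the maximal cells. Summing the two strict inequalities that define a strict type $(\{i\},\{j\})$ with $i \neq j$ yields the condition $b_i - a_i > b_j - a_j$, so by our construction no such cell exists when $i,j$ are in the same block, while for $i, j$ in different blocks both orderings are realized. The labels of the surviving maximal cells are therefore exactly $x_i^{\deg_G(i)}$ for each $i$ and $x_i^{\deg_G(i)-1} x_j^{\deg_G(j)-1}$ for each edge $ij$ of $H$, which by \eqref{eq:gens} is the minimal generating set of $M_G^{(1)}$.

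Next I would establish a graph-aware analogue of formula~\eqref{eq:monlabel}: for any cell $F$ of type $(T_a, T_b)$, one has $m_F = \prod_i x_i^{d_i}$ with $d_i = \deg_G(i)$ if $i \in T_a \cap T_b$, $d_i = \deg_G(i) - 1$ if $i \in T_a \triangle T_b$, and $d_i = 0$ otherwise. The key point is that this formula still holds in the degenerate arrangement: for any realizable type $(T_a, T_b)$ with $i \in T_a \setminus T_b$, a short argument with the defining inequalities and the coordinate ties shows that $T_b$ must contain some index outside the block of $i$, hence there is always a facet of type $(\{i\}, \{j\})$ with $ij \in E(H)$, contributing the desired exponent $\deg_G(i) - 1$ to the $\lcm$. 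With this formula in hand, the monotonicity of $d_i$ under componentwise inclusion of type pairs, combined with Proposition~\ref{prop:types} and Lemma~\ref{lem:types}, implies that each downset $(X_\mathcal{A})_{\leq m}$ is tropically convex, hence contractible; the Bayer--Sturmfels criterion (Lemma~\ref{lem:cocell}) then yields a cocellular resolution. Minimality follows from an adaptation of Proposition~\ref{prop:distinct}: proper containment $F \subsetneq F'$ of cells strictly changes the pair $(T_a, T_b)$, and by the formula above strictly changes at least one exponent of $m_F$.

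The main obstacle will be the realizability analysis in the degenerate arrangement: verifying that exactly the right set of types $(T_a, T_b)$ survives, and that for every surviving type with $i \in T_a \triangle T_b$ the ``opposite type set contains a neighbor of $i$ in $H$'' property holds, so that the clean labeling formula above is valid. The disjoint-cliques hypothesis is essential here, as it ensures that the same-block relation is a genuine equivalence relation and that the coordinate ties are mutually compatible. If the removed cliques overlapped, the induced relation would fail transitivity, the ties on $b_i - a_i$ could not be imposed consistently across blocks, and the exponent rule for lower-dimensional cells would cease to depend only on set-theoretic type data, breaking the tropical convexity argument at its crucial step.
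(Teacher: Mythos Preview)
Your approach matches the paper's: it too takes $\mathbf a$ at the origin and chooses $\mathbf b$ constant on each removed clique with distinct values across blocks (your condition $b_i-a_i=b_j-a_j$ iff same block is exactly what the paper's explicit coordinates encode), then verifies the same $\deg_G$-weighted labeling formula and runs the tropical-convexity / Proposition~\ref{prop:distinct} argument from Theorem~\ref{thm:resoneskel}. One small correction: for $i,j$ in different blocks, only \emph{one} of the types $(\{i\},\{j\})$, $(\{j\},\{i\})$ is realized (the strict inequality $b_i-a_i>b_j-a_j$ goes only one way), not both---and this is precisely what you need, since two maximal cells carrying the same label $m_{\{i,j\}}$ would spoil minimality.
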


\begin{proof}
Without loss of generality we can assume that $H$ is obtained from the complete graph $K_n$ by removing cliques $C_1, C_2, \dots, C_d$, where
\[V(C_1) = \{1,2, \dots, k_1\}, \; V(C_2) = \{k_1+1, \dots, k_2\}, \; \dots, \; V(C_d) = \{k_{d-1}+1, \dots, k_d\}.\]

We will explicitly write down the coordinates of the hyperplane arrangement that supports our resolution of $M_G^{(1)}$.  For this let ${\bf a} = (0, \dots, 0)$ be the origin in ${\mathbb R}^{n-1}$ and define ${\bf b} \in {\mathbb R}^{n-1}$ according to:
\[{\bf b} = 
\begin{cases}
(\underbrace{1, \dots, 1}_\text{$k_1$-times}, \underbrace{2, \dots, 2}_\text{$k_2-k_1$}, \dots, \underbrace{d, \dots, d}_\text{$k_d - k_{d-1}$}, d+1, d+2, \dots, n-k_d+d), &\quad\text{if $k_d \neq n$} \\

(\underbrace{1, \dots, 1}_\text{$k_1$-times}, \underbrace{2, \dots, 2}_\text{$k_2-k_1$}, \dots, \underbrace{d-1, \dots, d-1}_\text{$k_{d-1} - k_{d-2}$}, \underbrace{0,0, \dots, 0}_\text{$k_d - k_{d-1} - 1$}),  &\quad\text{if $k_d = n$.} \\
\end{cases}
\]

Let ${\mathcal A} = (T({\bf a}), T({\bf b}))$ be the resulting arrangement of tropical tropical hyperplanes, and let ${\mathcal X}_{\mathcal A}$ denote the induced decomposition of ${\mathbb R}^{n-1}$.  We first claim that the maximal regions of ${\mathcal X}_{\mathcal A}$ are in one-to-one correspondence with the set of minimal generators of $M_G^{(1)}$.  Recall that for any $G$, a redundant set of generators for $M_G^{(1)}$ is given by all monomials $m_\sigma$ for all subsets $\sigma \subset [n]$ with $1 \leq |\sigma| \leq 2$.  For our given graph $G$, a \emph{minimal} set of generators is given by monomials $m_{\{i\}} = x_i^{\deg i}$ for all $i \in [n]$, along with $m_{\{i,j\}}$ for all subsets $\{i,j\} \in E(G)$.  To see this note that if $ij \notin E(G)$ we have
\[m_{\{i,j\}} = x_i^{\deg(i)}x_j^{\deg(j)} = m_{\{i\}}m_{\{j\}},\]
\noindent
and hence the monomial $m_{\{i,j\}}$ is redundant.  

On the other hand, if $F \in {\mathcal X}_{\mathcal A}$ is a maximal cell with ${\bf x} \in F$ (so that ${\bf x}$ does not intersect either of the hyperplanes $T({\bf a})$ nor $T({\bf b})$) then ${\bf x}$ sits in exactly one cone of each hyperplane.  Hence the sets $T_a$ and $T_b$ are each singletons (where $T({\bf x}) = (T_a, T_b)$ is the type of ${\bf x}$).  By construction the only such types that can occur are of the type $(\{i\}, \{i\})$ and $(\{i\},\{j\})$, where $ij \in E(G)$.  Furthermore, all such pairs of singletons actually do occur exactly once as types of some region.  

To see this, note that since these types occur exactly once in a \emph{generic} arrangement it suffices to show that these types are actually achieved in our particular arrangement ${\mathcal A} = (T({\bf a}), T({\bf b}))$.  We first consider the repeated singletons. To achieve $(\{i\}, \{i\})$ for $i \neq n$ simply take the ${\bf x}$ to be a vector of 0's except for a sufficiently large positive entry in the $i$th coordinate.  To achieve $(\{n\},\{n\})$ take ${\bf x} = (-1,-1, \dots, -1)$, a vector of all $-1$'s.

To realize a cell with the type $(\{i\},\{j\})$ with $i<j$ we have to consider a couple of cases depending on the value of $k_d$ (the largest vertex in our removed cliques). In the case that $k_d \neq n$ we achieve the type $(\{i\},\{j\})$,  $j \neq n$ by taking ${\bf x}$ to be the vector consisting of all 0's except the values $b_i + 1$ in the $i$th coordinate and $b_j + 1/2$ in the $j$th coordinate.  To achieve the pair $(\{i\},\{n\})$ take ${\bf x}$ to be all 0's except for the value $b_i + 1/2$ in the $i$th coordinate.

In the case that $k_d = n$ we can achieve any type $(\{i\}, \{j\})$ with $i < j$ as follows.  If $j \in C_d$ take ${\bf x}$ to be all 0's except $b_i + 1/2$ in the $i$th coordinate and $b_j + 1 = 1$ in the $j$th coordinate.  Otherwise if $j \notin C_d$ take ${\bf x}$ to be all 0's except $b_i + 1$ in the $i$th coordinate and $b_j + 1/2$ in the $j$th coordinate.

We next describe the mononial labeling on the complex ${\mathcal X}_{\mathcal A}$, a modification of the monomial labeling of the case of the complete graph described in the proof of Theorem \ref{thm:resoneskel}.  If $F$ is a cell of ${\mathcal A}$ (of any dimension), and ${\bf x} \in F$ has type $T_{\mathcal A}({\bf x}) = (T_a, T_b)$ then the monomial label of $F$ is given by
\begin{equation}\label{eq:monlabelnongen}
m_F = \prod_{i=1}^n x_i^{d_i},
\end{equation}
where 
\[d_i  =
\begin{cases}
      \deg(i)   & \text{if }i \in T_a \text{ and } i \in T_b, \\
      \deg(i) - 1 & \text{if } i \in T_a \backslash T_b \text{ or } i \in T_b \backslash T_a, \\
      0 & \text{if } i \notin T_a \cup T_b.
      \end{cases}
     \]
With this convention, the maximal cells of ${\mathcal X}_{\mathcal A}$ are labeled with monomials corresponding to the set of minimal generators of $M_G^{(1)}$.   For all other faces $F$, Proposition \ref{prop:types} implies that the monomial label $x_F$ described in (\ref{eq:monlabel}) is equal to the $\lcm$ of the monomial labels of all faces that contain $F$.  If $m \in {\mathbb Z}^n$ is any monomial then the same argument employed in the proof of Theorem \ref{thm:resoneskel} shows that the complex $({\mathcal X}_{\mathcal A})_{\leq m}$ is tropically convex and hence contractible.  Applying Lemma \ref{lem:cocell} then implies that ${\mathcal X}_{\mathcal A}$ supports a cocellular resolution of the ideal $M_G^{(1)}$.  Furthermore, if $F \subset G$ are cells of ${\mathcal X}_{\mathcal A}$ with $F \neq G$ then Proposition \ref{prop:distinct} along with the formula (\ref{eq:monlabelnongen}) implies that $m_F \neq m_G$, so that the resolution is minimal.   This completes the proof.
\end{proof}

\begin{figure}[h]
\includegraphics[scale = .4]{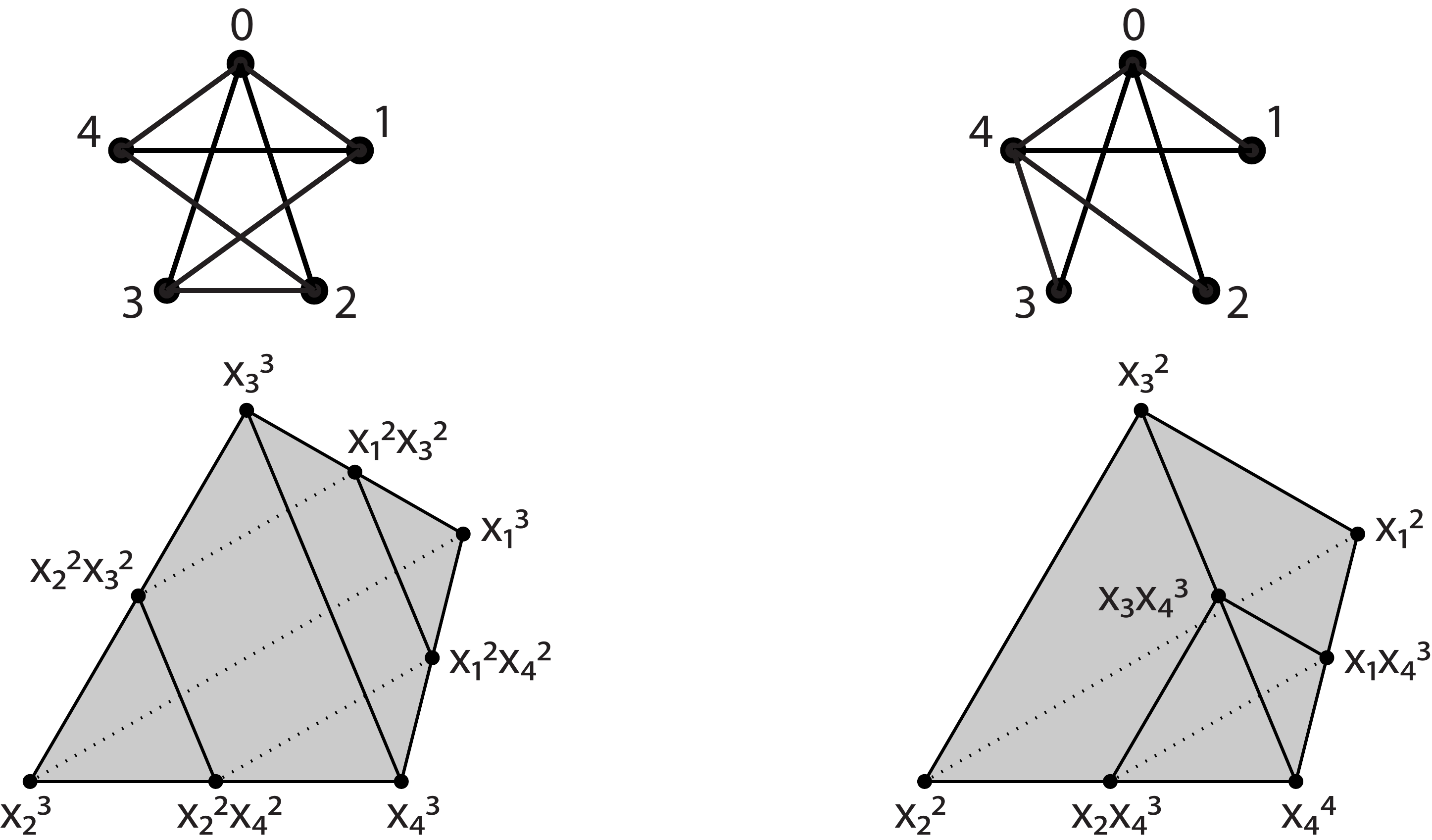}
\caption{Resolutions of $M_{G_i}^{(1)}$ for the graphs $G_1$ and $G_2$ from Example \ref{ex:degens}.}
\label{fig:degenres}
\end{figure}

\begin{example}\label{ex:degens}
As examples of the previous result, consider the graphs $G_1$ and $G_2$  depicted in Figure \ref{fig:degenres}. The graph $G_1$ is obtained by removing the complete graphs (edges) $12$ and $34$, and in $G_2$ we have removed the triangle $123$.  The cellular resolutions of each ideal are illustrated in Figure (\ref{fig:degenres}).  In both cases the complexes are dual to a non-generic arrangement of a pair of tropical hyperplanes in ${\mathbb R}^3$. For $G_1$ we have ${\bf b} = (1,1,0)$, for $G_2$ we have ${\bf b} = (1,1,1)$.
\end{example}

\section{Discussion and open questions}
We end with some discussion regarding other open questions and possible future directions.

\subsection{Resolutions of other skeleta}
We do not have uniform descriptions of resolutions for other skeleton ideals $M_G^{(k)}$, even for the complete graph $G = K_{n+1}$.  We have seen that a cellular resolution of $M_n = M_{K_{n+1}}$ is supported on ${\mathcal B}(\Delta_{n-1})$, the barycentric subdivision of the $(n-1)$-dimensional simplex.  As discussed above, a reasonable candidate for a geometric complex supporting a resolution of the ideals $M_n^{(k)}$ would be a subdivision of the simplex $\Delta_{n-1}$ that agrees with ${\mathcal B}(\Delta_{n-1})$ up to the $k$-dimensional skeleton.   For the case of $k=1$ we have seen that complex generated by a generic arrangement of two tropical hyperplanes has this property, but we do not know of any candidates for other skeleta.  We remark that an arrangement of $k$ tropical hyperplanes will \emph{not} produce the desired complex, since for instance the number of maximal cells is not equal to the number of minimal generators of the relevant ideal.

A particularly tractable case seems to be in codimension one, where the ideal $M^{(n-2)}_G \subset M_G$ is only missing the generator of $M_G$ given by the monomial $m_{[n]} $.   Here we would need a subdivision of $\Delta_{n+1}$ that agrees with the complex ${\mathcal B}(\Delta_{n+1})$ on the boundary but is missing the `interior' vertex.

\subsection{Enumeration of parking functions}

A natural invariant of a parking function (thought of as a standard monomial in $M_n \coloneqq M_{K_{n+1}}$) is its degree.  With this statistic we can define a generating function $P_n(q)$ of parking functions of length $n$ as follows:
\[P_n(q) = \sum_{\alpha = (a_1, a_2, \dots, a_n)} q^{a_1 + a_2 + \dots +a_n}\]
where $\alpha$ ranges over all parking functions of length $n$ (corresponding to standard monomials of $M_n$). For small values of $n$ we get
\begin{equation}
\begin{split}
& P_1(q) = 1 \\
& P_2(q) = 2q + 1\\
& P_3(q) = 6q^3 + 6q^2 + 3q + 1\\
& P_4(q) = 24q^6 + 36q^5 + 30q^4 + 20q^3 + 10q^2 + 4q + 1
\end{split}
\end{equation}

Kreweras \cite{Kre} studied another polynomial $I_n(q)$ which enumerates (labeled) rooted forests by number of inversions, and showed that $I_n(q)$ was closely connected to $P_n(q)$.  
Here a \defi{rooted forest} on $[n]$ is a graph on the vertex set $\{1,2,\dots, n\}$ with the property that each connected component is a rooted tree.  An \defi{inversion} of a rooted forest is a pair $(i,j)$ satisfying $i < j$ and such that $j$ lies on the unique path connecting $k$ to $i$, where $k$ is the root of the tree containing the vertex $i$.  We let $\inv(F)$ denote the number of inversions of $F$ and define the polynomial $I_n(q)$ as
\[I_n(q) = \sum_F q^{\inv(F)}\]
where $F$ ranges over all labeled forests on $[n]$.

The polynomials $I_n(q)$ have the following connection to parking functions of complete graphs (as shown in \cite{Kre}):
\[q^{n \choose 2} I_n(1/q) = P_n(q).\]

Is there an analogous story for the standard monomials of the skeleton ideals $M_n^{(k)}$?  Using the degree statistic we can similarly define a generating function 
\[P^{(k)}_n(q) = \sum_{\beta = (b_1, b_2, \dots, b_n)} q^{b_1 + b_2 + \dots +b_n}\]
where $\beta$ ranges over all standard monomials of $M_n^{(k)}$.

For the case $k=1$ and small values of $n$ we have
\begin{equation}
\begin{split}
& P^{(1)}_1(q) = 1 \\
& P^{(1)}_2(q) = 2q + 1\\
& P^{(1)}_3(q) = 3q^4 + 7q^3 + 6q^2 + 3q + 1\\
& P^{(1)}_4(q) = 4q^9 + 13q^8 + 28q^7 + 38q^6 + 40q^5 + 31 q^4 + 20 q^3 + 10 q^2 + 4q +1
\end{split}
\end{equation}

In Section \ref{sec:stdmon} we have seen that standard monomials of $M_n^{(1)}$ are counted (in a weighted fashion) by the spanning $TU$-subgraphs of $K_n$ (with spanning trees appearing as a special cases).  Is there a notion of an `inversion' for a $TU$-subgraph that recovers this statistic?

For the case $k = n-2$ the standard monomials of the ideals $M_n^{(n-2)}$ lead to a notion of `spherical parking functions' that are shown in \cite{DocPar} to be in bijection with a class of labeled graphs called `uprooted trees'.  In \cite{DocPar} a conjectural relationship between the degree of spherical parking functions and a notion of inversions for uprooted trees is also provided.

\subsection{Power ideals}

Recall that in \cite{PosSha} the authors were interested in another class of `power ideals' that one associate to a graph $G$ on vertex set $\{0,1,\dots,n\}$.  We briefly recall the definition.  For any nonempty subset $\sigma \subseteq \{1,2,\dots, n\}$ let
\[p_\sigma = \big(\sum_{i \in \sigma} x_i\big)^{D_\sigma},\]
\noindent
where $D_\sigma = \sum_{i \in \sigma} d_\sigma(i)$, with $d_\sigma$ defined in Equation (\ref{eq:dsigma}).  Now define $J_G$ to be the ideal in the polynomial ring $S = {\mathbb K}[x_1, x_2, \dots, x_n]$ generated by the $p_\sigma$ for all nonempty subsets $\sigma$, and let ${\mathcal B}_G$ denote the quotient algebra ${\mathcal B}_G = S/J_G$.  Recall that ${\mathcal A}_G = S/M_G$ is the algebra obtained by modding out the $G$-parking function ideal.  The algebras ${\mathcal A}_G$ and ${\mathcal B}_G$ are both graded, and in \cite{PosSha} it is shown that the Hilbert series coincide.  In fact both algebras have a ${\mathbb K}$-basis given by the monomials ${\bf x}^{\bf b}$ where ${\bf b}$ ranges over all $G$-parking functions.   This is particularly surprising since (in general) the monomial ideal $M_G$ is not an \emph{initial ideal} of the ideal $J_G$ for any monomial order, and hence standard Gr\"obner degeneration techniques do not apply.

In our context we have a natural definition for the skeleton of these power ideals, mimicking the construction of the monomial case discussed above.  Namely, for any graph $G$ and integer $k$ with $0 \leq k \leq n-1$ we define the $k$-skeleton power ideal $J^{(k)}_G$ to be the subideal of $J_G$ given by
\[J_G^{(k)} = \langle p_\sigma: 1 \leq |\sigma| \leq k+1 \rangle.\]

One can check that for $0 \leq k \leq 1$ we have $M^{(k)}_G = J^{(k)}_G$ (equality as ideals), but already for the 2-skeleton the ideals differ.  In fact the quotient algebras can have different Hilbert series, as the next example illustrates.

\begin{example} \label{ex:Hilbineq}
Let $G$ be the graph obtained from removing the edge $12$ from the complete graph on vertex set $\{0,1,2,3,4\}$.  Then we have
\begin{equation}
M_G^{(2)} = \langle x_1^3, x_2^3, x_3^4, x_4^4, x_1^2x_3^3, x_1^2x_4^3, x_2^2x_3^3, x_2^2x_4^3, x_3^3x_4^3, x_1^2x_2^2x_3^2, x_1^2x_2x_4^2, x_1x_3^2x_4^2, x_2x_3^2x_4^2 \rangle \\
\end{equation}

\begin{equation}
\begin{aligned}
\quad J_G^{(2)} = {} &\langle x_1^3, x_2^3, x_3^4, x_4^4, (x_1 + x_2)^6, (x_1+ x_3)^5, (x_1+x_4)^5, (x_2+x_3)^5, (x_2+x_4)^5,  \\
 & (x_3+x_4)^6, (x_1 + x_2 + x_3)^6, (x_1 +x_2 + x_4)^5, (x_1+ x_3+x_4)^5, (x_2 + x_3 + x_4)^5 \rangle
 \end{aligned}
 \end{equation}

According to our calculations with Macaulay2 \cite{M2}, the graded components of $S/M_G^{(2)}$ and $S/J_G^{(2)}$ have the same dimension except in degree 6, where
\[\dim_{\mathbb K} (S/M_G^{(2)})_6 = 7 > 6 = \dim_{\mathbb K} (S/J_G^{(2)})_6.\]

\end{example} 
We do not know if termwise inequalities $\Hilb S/M_G^{(k)} \geq \Hilb S/J_G^{(k)}$ hold for the Hilbert functions of the two algebras, as was the case for deformations of more general monotone monomial ideals \cite{PosSha} (and as in Example \ref{ex:Hilbineq}).  We will point that in this example the monomial basis for $(S/M_G^{(2)})_6$ does \emph{not} span the linear space $(S/J_G^{(2)})_6$.

\end{document}